\newcommand{\ind}{\mathbbm{1}}
\numberwithin{equation}{section}
\theoremstyle{plain}
\newtheorem{satz}{Theorem}[section]
\newtheorem{exlem}[satz]{Examplemma}
\newtheorem{prop}[satz]{Proposition}
\theoremstyle{remark}
\newtheorem{rmk}[satz]{Remark}
\newtheorem{bsp}[satz]{Example}
\newtheorem{dfn}[satz]{Definition}
\author{Jonas Jalowy}
\address{\footnotesize Jonas Jalowy: {\itshape Paderborn University, Warburger Str.~100, 33098 Paderborn, Germany}}
\email{jjalowy@math.uni-paderborn.de}
\author{Hanna Stange}
\address{\footnotesize Hanna Stange: {\itshape University of M\"{u}nster, Orl\'{e}ans-Ring 10, 48149 M\"{u}nster, Germany}}
\email{hanna.stange@uni-muenster.de}
\keywords{Hyperuniformity, Point process, Covariance structure, Gaussian Field}
\subjclass[2020]{Primary: 60G55 ;  Secondary: 60F05, 60G50}
\title{Box-Covariances of Hyperuniform Point Processes}
\begin{document}
	\begin{abstract}
In this work, we present a complete characterization of the covariance structure of number statistics in boxes for hyperuniform point processes. 
		Under a standard integrability assumption,
		the covariance depends solely on the overlap of the faces of the box. Beyond this assumption, a novel interpolating covariance structure emerges. 
		
		This enables us to identify a limiting Gaussian ``coarse-grained'' process, counting the number of points in large boxes as a function of the box position. Depending on the integrability assumption, this process may be continuous or discontinuous, e.g.~in $d=1$ it is given by an increment process of a fractional Brownian motion.
\end{abstract}


\keywords{Point process, Hyperuniformity,  Covariance structure, Gaussian field}

\maketitle

\section{Introduction}
	A point process $\eta$ on $\mathbb{R}^d$ is called \emph{hyperuniform} if the density fluctuations in boxes $\Lambda_n = [0,n]^d$, measured by $\mathrm{Var}(\eta(\Lambda_n))$, increase at a slower rate than the volume of the box $\lambda_d(\Lambda_n)$, i.e.\
	\begin{align}\label{eq:HU}
		\lim_{n \to \infty} \frac{\mathrm{Var}(\eta(\Lambda_n))}{\lambda_d(\Lambda_n)} = 0.
	\end{align} 
	While the Poisson point process is not hyperuniform, as its variance satisfies $\mathrm{Var}(\eta(\Lambda_n)) = \lambda_d(\Lambda_n)$, well-known examples of hyperuniform point processes include perturbed lattices, the $\mathsf{sine}_2$ process and the Ginibre ensemble (see \cite{Gacs,dereudre2024nonhyperuniformityperturbedlattices,AGZ,ghosh2016fluctuationslargedeviationsrigidity}). For an overview of the topic, we refer to the surveys \cite{Torquato_2003,Coste,LRSurvey}. 
	Given that hyperuniform point processes are characterized by variance asymptotics of growing boxes, this work answers the question: \begin{center}
		\emph{What is the asymptotic (relative) covariance between the number statistics of two boxes?}
	\end{center} 
	More precisely, we consider a hyperuniform point process $\eta$ and study the limiting correlation coefficient
	\begin{align}\label{eq:cov}
		\mathrm{cov}(z):= \lim_{n\to\infty}\frac{\mathrm{Cov}\big( \eta(\Lambda_n),\eta(\Lambda_n(nz))\big)}{\mathrm{Var}\big(\eta(\Lambda_n)\big)}, \quad z\in\mathbb{R}^d,
	\end{align}
	where $\Lambda_n(nz):=nz+\Lambda_n$ is the box of volume $n^d$ starting at $nz\in\mathbb{R}^d$. 
	
	\subsection{Results} 
	Recall that a stationary point process with \emph{truncated pair correlation measure} $\beta$ (see  \eqref{eq:truncatedpaircorrelationmeasure} below for the definition) of finite total variation, i.e. $|\beta|(\mathbb{R}^d)<\infty$, is hyperuniform if and only if  $\beta(\mathbb{R}^d)=-1$, see Proposition \ref{prop:hyperuniformity}. In the sequel, we call $\beta$ \emph{box-symmetric}, if it is invariant under permutation and reflection of its coordinates.
	Assuming not only that $|\beta|(\mathbb{R}^d)<\infty$ but in addition the stronger integrability assumption $ \int|y||\beta|(\mathrm{d} y)<\infty$, consistent with \cite{SWY,YogeshKrish}, the following covariance structure for hyperuniform point processes emerges.
	
\begin{satz}\label{thm:Cov_integrable}
	Let $\eta$ be a stationary hyperuniform point process with box-symmetric truncated pair correlation measure $\beta$ satisfying $\int|y||\beta|(\mathrm{d} y)<\infty$. Then, 
	\begin{align}\label{eq:thm_cov}
		\mathrm{cov}(z)=\begin{cases}-\frac 1 {2d}\lambda_{d-1}(\partial \Lambda_1\cap \partial \Lambda_1(z)),\quad &\text{if } \mathring\Lambda_1\cap \mathring\Lambda_1(z)=\emptyset,\\
			\frac 1 {2d}\lambda_{d-1}(\partial \Lambda_1\cap \partial \Lambda_1(z)),\quad &\text{if }\mathring\Lambda_1\cap \mathring\Lambda_1(z)\neq\emptyset,
		\end{cases}
	\end{align}
	where $\partial \Lambda_1(z)$ is the boundary of $\Lambda_1(z)$ and $\mathring\Lambda_1(z)$ is the interior of $\Lambda_1(z)$. In particular, $\mathrm{cov}(z)$ is zero if the boxes are disjoint or if $z\in\mathring \Lambda_1$, it is 1 if $z=0$, and it is $- 1/ {(2d)}$ if $z\in\mathbb{Z}^d,|z|=1$.
\end{satz}
While for hyperuniform point processes satisfying the assumptions of Theorem~\ref{thm:Cov_integrable} the covariance $\mathrm{cov}(z)$ only depends on the overlap of the boundary $\partial \Lambda_1 \cap \partial \Lambda_1(z)$, the covariance structure changes significantly when the integrability assumption is dropped. To this end, we study point processes on $\mathbb{R}$ for which $H(y):=-\int_0^y \beta([x,\infty))\mathrm{d}x$ 
is regularly varying with parameter $a\in[0,1]$, i.e.\
\begin{align*}
	\lim_{n\to\infty} \frac{H(xn)}{H(n)}=x^a,\quad \text{ for all } x\in\mathbb{R}.
\end{align*}
\begin{satz}\label{thm:Cov_non_int}
	Let $\eta$ be a stationary hyperuniform point process on $\mathbb{R}$ with 
	truncated pair correlation measure $\beta$ such that $|\beta|(\mathbb{R})<\infty$. If

	\quad \emph{(i)} the function $H(y)=-\int_0^y \beta([x,\infty))\mathrm{d}x= \frac{1}{2}\mathrm{Var}(\eta(\Lambda_y))$ is regularly varying with parameter $a\in[0,1],$ then,
	
	\quad \emph{(ii)} for all $z\in\mathbb{R}$ we have  
	\begin{align}\label{ieq:Cov_non_int}
		\mathrm{cov}(z)=
		\frac{|z-1|^{a}}{2}+ \frac{|z+1|^{a}}{2}-|z|^{a},
	\end{align}
    for $a\in (0,1]$, and $\mathrm{cov}(z)=\delta_0(z)-\frac{1}{2} \delta_{-1}(z)-\frac{1}{2}\delta_{+1}(z)$ for $a=0$ (as in \eqref{eq:thm_cov}).
	
	Conversely, if we additionally assume that $\beta([x,\infty))$ is non-positive (or non-negative) for $x$ sufficiently large, then \emph{(i)} and \emph{(ii)} are equivalent. Moreover, if
	
	\quad \emph{(iii)} the limit $\mathrm{cov}(z)<\infty$ 
	exists for $z=1$ and $z=2$,
	
	then both \emph{(i)} and \emph{(ii)} follow.
	In either of these cases, neighboring boxes 
	have $\mathrm{cov}(1)=2^{a-1}-1$.
\end{satz}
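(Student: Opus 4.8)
The plan is to reduce everything to the single variance function $G$, for which the covariance of counts in any two intervals has a clean closed form on $\R$. Writing $N(t):=\eta([0,t])$ and using stationarity together with additivity $N(s+r)=N(s)+(N(s+r)-N(s))$, I first expand $\Var(N(s+r))=2G(s+r)$ to get the adjacent-interval identity $\Cov(\eta([0,s]),\eta([s,s+r]))=G(s+r)-G(s)-G(r)$. Feeding this into $\Cov(N(s),N(t))=\Var(N(s))+\Cov(N(s),N(t)-N(s))$ yields, for all $s,t\ge 0$,
\begin{align*}
	\Cov(N(s),N(t))=G(s)+G(t)-G(|t-s|).
\end{align*}
Bilinearity of the covariance in the four endpoints of $\Lambda_n=[0,n]$ and $\Lambda_n(nz)=[nz,n(z+1)]$, together with $\Cov(N(0),\,\cdot\,)=0$, then gives the key identity
\begin{align*}
	\Cov\big(\eta(\Lambda_n),\eta(\Lambda_n(nz))\big)=G\big(n(z+1)\big)-2G(nz)+G\big(n|z-1|\big),\qquad z\ge 0,
\end{align*}
the case $z<0$ following from the symmetry $\cov(z)=\cov(-z)$.

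For (i) $\Rightarrow$ (ii) I divide this identity by $\Var(\eta(\Lambda_n))=2G(n)$ and let $n\to\infty$. Since $G$ is regularly varying of index $a$, each of the three ratios converges termwise, $G(nx)/G(n)\to x^a$, so for $z\ge 0$
\begin{align*}
	\cov(z)=\frac{(z+1)^a-2z^a+|z-1|^a}{2}=\frac{|z+1|^a+|z-1|^a}{2}-|z|^a,
\end{align*}
using $G(0)=0=0^a$ at the boundary values $z=0$ and $z=1$; symmetry extends this to all $z\in\R$ and gives \eqref{ieq:Cov_non_int}. The slowly varying case $a=0$ is read off the same limits: $G(nx)/G(n)\to 1$ for every $x>0$ while $G(0)/G(n)\to 0$, so $\cov$ vanishes except at $z\in\{0,\pm1\}$, where it equals $1,-\tfrac12,-\tfrac12$, i.e.\ $\cov=\delta_0-\tfrac12\delta_{-1}-\tfrac12\delta_{+1}$. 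Specialising to $z=1$, where $|z-1|^a=0$, gives $\cov(1)=2^{a-1}-1$ in every case.

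For the converse the idea is to run the identity backwards at the two smallest shifts. Existence of $\cov(1)$ is, by the identity, equivalent to existence of $r_2:=\lim_n G(2n)/G(n)$ with $\cov(1)=r_2/2-1$; existence of $\cov(2)$ then forces $r_3:=\lim_n G(3n)/G(n)=2\cov(2)+2r_2-1$ to exist as well. Whenever such scaling limits exist they are multiplicative: substituting $m=\mu n$ shows $g(\lambda):=\lim_n G(\lambda n)/G(n)$ satisfies $g(\lambda\mu)=g(\lambda)g(\mu)$, so $g$ is defined and finite on the multiplicative group generated by $2$ and $3$. Because $\log 2/\log 3$ is irrational this group is dense in $(0,\infty)$ — which is exactly why both $z=1$ and $z=2$ are needed, a single shift delivering only the non-dense dyadic scales.

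The main obstacle is to upgrade convergence along this dense but countable set to genuine regular variation, i.e.\ to interpolate $g$ to $\lambda\mapsto\lambda^a$ on all of $(0,\infty)$. Here the sign hypothesis enters decisively: since $G'(y)=-\beta([y,\infty))$ is eventually of one sign, $G$ is eventually monotone, and an eventually monotone function whose scaling ratios converge, finitely and positively, along a dense multiplicative subgroup is regularly varying by the Characterisation Theorem for regular variation; monotonicity of the resulting multiplicative $g$ forces $g(\lambda)=\lambda^a$ with a single index $a$, so that $r_2=2^a$ and $r_3=3^a$ are automatically consistent, whence (i). Combined with (i) $\Rightarrow$ (ii) this gives the asserted equivalence, and the very same monotonicity argument turns the bare existence of $\cov(1),\cov(2)$ in (iii) into (i) and hence (ii). I expect the only delicate points to be verifying the hypotheses of the Characterisation Theorem, namely positivity of $r_2,r_3$ and eventual monotonicity of $G$ from the sign of $\beta([\cdot,\infty))$; the interpolation itself is standard once monotonicity is available.
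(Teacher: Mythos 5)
Your argument has the same skeleton as the paper's — reduce to the three\--term identity $\Cov(\eta(\Lambda_n),\eta(\Lambda_n(nz)))=G(n(|z|+1))-2G(n|z|)+G(n\,||z|-1|)$, divide by $\Var(\eta(\Lambda_n))=2G(n)$, use termwise convergence of $G(nx)/G(n)$ for (i)$\Rightarrow$(ii), and for the converse feed $r_2=\lim G(2n)/G(n)$ and $r_3=\lim G(3n)/G(n)$ into the Karamata/characterisation theorem with $\log 2/\log 3$ irrational — but you reach the key identity by a different route. The paper manipulates the integral formula of Proposition \ref{prop:var} directly, splitting integrals of $F(x)=\beta([x,\infty))$ and repeatedly invoking $F(x)+F(-x)=-1$; you instead observe that $N(t)=\eta([0,t])$ has stationary increments, so $\Cov(N(s),N(t))=G(s)+G(t)-G(|t-s|)$ with $G=\tfrac12\Var(N(\cdot))$, and expand by bilinearity. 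Your route is shorter and makes the fractional-Brownian-motion structure behind Theorem \ref{thm:CLT} transparent; the paper's route keeps the dependence on $\beta$ explicit throughout.

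Two points are missing and should be added. First, you use both $G(t)=\tfrac12\Var(\eta(\Lambda_t))$ and $G'(y)=-\beta([y,\infty))$ without showing these define the same function; this identity is part of assertion (i), it is the paper's \eqref{eq:var} (obtained from \eqref{eq:Cov} at $z=0$ together with $F(x)+F(-x)=-1$, i.e.\ hyperuniformity plus the automatic reflection symmetry of $\beta$ in $d=1$), and it is the only place where the hypotheses on $\beta$ actually enter your argument — in particular your use of the sign of $\beta([\cdot,\infty))$ to get eventual monotonicity of $G$ relies on it. Second, in the converse direction you obtain regular variation with \emph{some} index $a$ but do not show $a\in[0,1]$: the paper excludes $a<0$ because it would force $\Var(\eta(\Lambda_n))\to0$, contradicting stationarity, and $a>1$ because $|\beta|(\R)<\infty$ gives $F(x)\to0$ and hence sublinear growth of $G$. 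The remaining point you flag as delicate (positivity of $r_2,r_3$) is handled exactly as you anticipate, via eventual monotonicity and non-vanishing of the variance; with the two additions above your proof is complete.
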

To the best of our knowledge, the non-integrable case covered by Theorem~\ref{thm:Cov_non_int} did not appear in the literature, which we will discuss in Section \ref{sec:literature} below. The class of point processes that satisfies the assumption of Theorem \ref{thm:Cov_non_int} interpolates between the integrable setting of Theorem~\ref{thm:Cov_integrable} and the Poisson case. Indeed, examining \eqref{ieq:Cov_non_int}, which is represented in Figure~\ref{fig:graph}, we observe that in the first extremal case $a=0$, the covariance is non-zero only for $z \in \{-1,0,+1\}$, as in Theorem~\ref{thm:Cov_integrable}. In the other extremal case $a=1$, the covariance is given by $\mathrm{cov}(z) = \lambda_1(\Lambda_1 \cap \Lambda_1(z)),$ which is the covariance of a Poisson point process.
Examples satisfying the assumptions of Theorem \ref{thm:Cov_non_int} include the large class of hyperuniform determinantal point processes in $d=1$, such as the famous $\mathsf{sine}_2$ process arising as the limit of eigenvalues of GUE matrices, corresponding to $a=0$, and perturbed lattices for arbitrary $a\in[0,1]$. Examples will be discussed in Section \ref{sec:examples} below. 
\begin{figure}[b]
	\centering
	\includegraphics[width=0.39\linewidth]{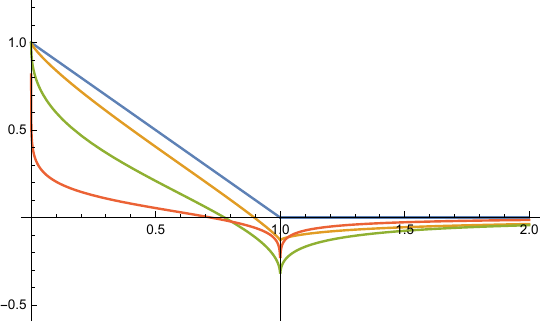}
	\caption{The function $\mathrm{cov}$ from \eqref{ieq:Cov_non_int} describing the covariances between points in intervals shifted by $z$, for five different values of the varying parameter $a\in [0,1]$. The nearly integrable case (black line, $a=0$) is discontinuous in $z=1$. The cases $a=1/10$ (red), $a=4/10$ (green) and $a=8/10$ (yellow) interpolate to the Poisson case $a=1$ (blue).}
	\label{fig:graph}
\end{figure}

\begin{rmk}
	The heuristic reason for negative correlation between neighboring boxes is hyperuniformity: If $\eta(\Lambda_n)$ is large, then we expect the adjacent number $\eta(\Lambda_n(nz))$ to be small in order to maintain hyperuniformity.
	These extra points of $\eta$ inside $\Lambda_n$ will typically lie close to the boundary $\partial\Lambda_n$ (contrary to the Poisson case), which explains the boundary effects. 
	Moreover, in the non-integrable setting of Theorem~\ref{thm:Cov_non_int} long-range interactions induce correlations between distant regions at $|z|>1$. 
\end{rmk}

In higher dimensions $d>1$, the computations and presentation of the results become significantly more involved. Hence, we restrict our discussion in Section \ref{sec:highdim} to the case $ d = 2 $.  The resulting covariance structure is stated in Theorem~\ref{thm:Cov_non_int_2} below and the employed methods can be extended to higher dimensions.

The previous results show that the microscopic correlations (described by $\beta$) become irrelevant when ``zooming out'', and that a universal covariance structure emerges. The number statistics $\eta(\Lambda_n(z))$ forget the microscopic position of points of $\eta$ and what remains in the large-$n$ limit is a ``coarse-grained'' process parametrized by the shift $z\in\mathbb{R}^d$. This ``coarse-grained'' process is asymptotically Gaussian, under the so-called \emph{Brillinger-mixing} condition (see Definition \ref{dfn:Brillinger} below).

\begin{satz}\label{thm:CLT}
	Let $\eta$ be a hyperuniform point process satisfying the assumptions of Theorem \ref{thm:Cov_integrable} for $d\ge2$, Theorem \ref{thm:Cov_non_int} for $d=1$ and $a>0$, or that of Theorem \ref{thm:Cov_non_int_2} below. 
	If $\eta$ is Brillinger-mixing, then we have weak convergence in finite-dimensional distributions
	\begin{align}\label{eq:numberstat}
		\Bigg(\frac{\eta(\Lambda_n(z))-n^d}{\sqrt{\mathrm{Var}\big(\eta(\Lambda_n)\big)}}\Bigg)_{z\in\mathbb{R}^d}{\underset{n\to\infty}\longrightarrow}\big(G(z)\big)_{z\in\mathbb{R}^d} 
	\end{align}
	towards a centered stationary Gaussian field $G$ with covariance function $\mathrm{cov}(z)$. In the case of $d=1$ with $a>0$, we obtain the increment process $G(z)=B_{a/2}(z+1)-B_{a/2}(z)$ of the fractional Brownian motion $B_{a/2}$ with Hurst index $a/2$.
\end{satz}
Note that existence of the Gaussian field $G$ follows from the existence of the fractional Brownian noise, due to Kolmogorov's extension theorem and non-negative-definiteness of $\mathrm{cov}$.
For $ d = 1 $ and $a>0$, the covariance function $\mathrm{cov}$ is continuous at $z=0$ and the Kolmogorov–Chentsov theorem implies that the process $ G $ admits $\alpha$-Hölder continuous paths for all $\alpha < {a}/{2}$, see Figure~\ref{fig:G}. By \cite{NecSuffCont}, the converse implication also holds, i.e.~$G$ has discontinuous paths if ${\mathrm{cov}}$ is discontinuous as in the integrable setting of Theorem \ref{thm:Cov_integrable} or the case $d=1,a=0$ (where $G$ exists by the same reasoning, but Theorem \ref{thm:CLT} does not apply).

\begin{figure}[h]
	\centering
	\includegraphics[width=0.55\linewidth]{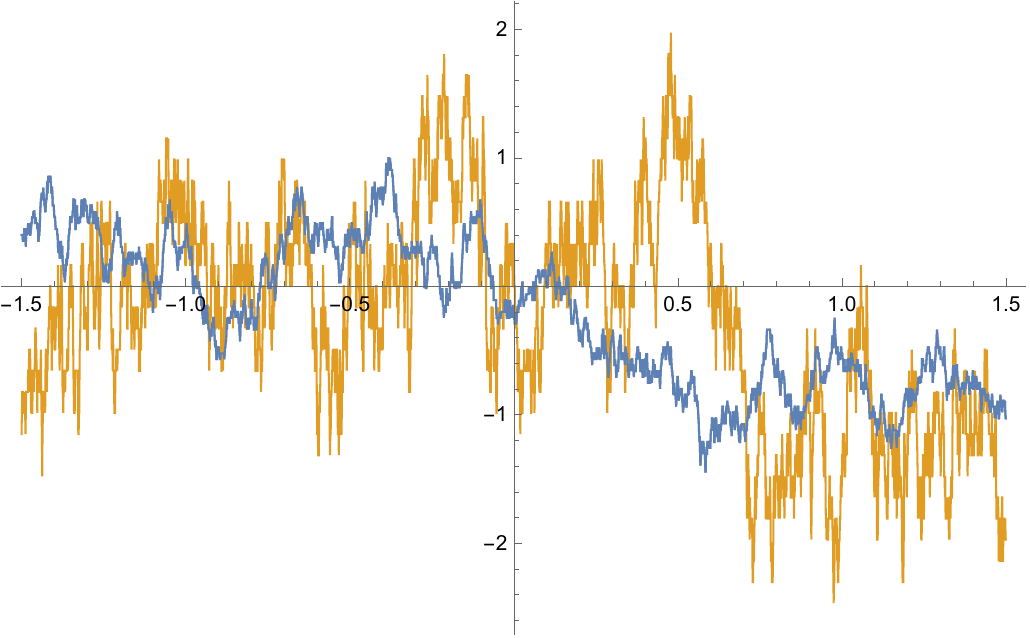}
	\caption{A plot of the rescaled number statistics \eqref{eq:numberstat} for $n=1000$ and $d=1$ (which approximates the ``coarse-grained'' process $G(z)$) for the perturbed $\mathsf{sine}_2$ process with $a=1/4$ (in orange) and $a=3/4$ (in blue), see Example \ref{exlem} (d). Note the higher path regularity in the heavier-tailed case $a=3/4$ and the clearly visible negative correlation at neighboring boxes $\Lambda_n(nz)$, e.g.~at $z\approx-2/5,+3/5$ in blue or $z\approx-1/2,+1/2$ in orange.}
	\label{fig:G}
\end{figure}

\subsection{Connection to the literature}\label{sec:literature}

The study of hyperuniformity goes back to the 1970s (see e.g.\ \cite{Gacs,Martin1980TheCF}), although the term ``hyperuniformity'' first appeared in the 2000s, for instance, in \cite{Torquato_2003}.  In \cite{Torquato_2003} Torquato classified hyperuniform point processes depending on the speed at which \eqref{eq:HU} converges to zero into Class~I at rate $n^{-1}$, Class~II at rate $n^{-1}\log n$, and Class~III at rate $n^{-1+a}$ for some $0<a<1$. From this perspective, the setting of Theorem \ref{thm:Cov_integrable} deals with Class~I hyperuniform systems and Theorem \ref{thm:Cov_non_int} covers the remaining cases (note that this classification is not exhaustive, see \cite[Theorem 3]{dereudre2024nonhyperuniformityperturbedlattices}). By now, hyperuniformity has found many applications for example in chemistry and materials science \cite{Material, PhysRevLett.119.136002} and has become a rapidly growing and active research area, e.g.~the above variance asymptotics have recently been critical in the understanding of hyperuniformity using the language of (optimal) transport, see \cite{lachieze2024hyperuniformity,huesmann2024linkhyperuniformitycoulombenergy,butez2024wasserstein,Klatt}.

In the study of hyperuniformity, the shape of the so-called observation window plays an important role. Although we work under assumptions ensuring that the definition of hyperuniformity is independent of this choice (see Proposition \ref{prop:hyperuniformity}), the limiting covariance structure does depend on the particular window. The most natural observation windows are balls and cubes, see e.g. \cite{KimTorquato}. In this work, we focus on such boxes, since these have non-trivial surface overlap (contrary to balls) and because their product structure allows for direct computations.
Our results extend to rectangular boxes without substantial modifications (see Remarks \ref{rem:rectangles_integrable}, \ref{rem:rectangles_non-integrable}, and \ref{rem:rectangles_CLT}), but we focus on boxes with equal side length since such cubes reveal the same conceptual insight and let us keep a transparent presentation.

The statement of Theorem \ref{thm:Cov_integrable} in the integrable case first appeared as special cases, e.g.~for eigenvalues of random matrices \cite{DiaconisEvans,Wieand}, Coulomb systems \cite{Lebowitz,AGL}, and zeros of the GAF \cite{SodinGAF}. For hyperuniform point processes on the complex plane (making complex analytic tools available and corresponding to $d=2$) and under the same integrability assumption, Theorem \ref{thm:Cov_integrable} follows from Sodin, Wennman, Yakir \cite{SWY}, in which arbitrary domains with rectifiable regular boundary are considered. 
At the time of writing this work, this has been generalized to arbitrary dimensions $d\in\mathbb{N}$ by Krishnapur and Yogeshwaran \cite{YogeshKrish}, also using the same integrability assumption as in Theorem \ref{thm:Cov_integrable}. The authors work under the slightly stronger assumption of $\mathcal C^1$ smooth domains excluding boxes $\Lambda_n$, but conjecture that it should hold for more general domains, say Lipschitz domains. Both proofs of \cite{SWY,YogeshKrish} rely on the smoothness for approximative arguments and on the integrability assumption providing an absolutely continuous spectral measure $\hat \beta$, see e.g.~\cite[Remark 5.3]{SWY1}. From this perspective, our contribution of Theorem \ref{thm:Cov_integrable} is an elementary proof free of approximations, charts, and Fourier analytic methods (for which cubes may cause problems).

The non-integrable setting of Theorem \ref{thm:Cov_non_int} corresponds to point processes that are hyperuniform but not number rigid (see \cite[Corollary 4]{LRigidity}). However, the resulting covariance structure appears to be novel in the existing literature.

Our work is partially motivated by Adhikari, Ghosh, Lebowitz \cite[Theorem 2]{AGL}, who studied stationary random fields on $\mathbb{Z}^d$ under a moment condition corresponding to $\int|y|^d|\beta|(\mathrm{d} y)<\infty$. Their result translates to $\mathrm{cov}(z)=(-2)^{-k}$ for $z\in \{0,1\}^d, |z|=k\in\{0,\dots, d\}$ and $\mathrm{cov}(z)=0$ for all other lattice shifts $ z\in \mathbb{Z}^d$.
In particular, neighboring boxes have negative covariances as in Theorem \ref{thm:Cov_integrable}, but curiously, covariances become positive if the dimension of $\Lambda_1\cap \Lambda_1(z)$ is even. However, their setting is somewhat complementary to that of point processes. 
In particular, Theorem \ref{thm:Cov_integrable} explicitly verifies that the phenomenon of positive covariance between boxes with touching edges is an artifact of the discrete nature of the underlying process.

The works \cite{YogeshKrish, Coste, Mastrilli, Ivanoff, AGL, Lebowitz, SodinGAF} also establish asymptotic normality of $\eta(\Lambda_n)$ or linear statistics $\int \varphi(x/n) \eta(\mathrm{d}x)$ for sufficiently smooth test functions $\varphi$. A powerful and flexible approach for proving such CLTs is the cumulant method, which we will also adopt for the proof of Theorem \ref{thm:CLT}. Under a weakened integrability condition on $\beta$ and for smooth statistics even non-Gaussian limits may emerge, see \cite{mastrilli2026asymptotic}. The assumption $a>0$ in Theorem \ref{thm:CLT} ensures sufficient growth of $\mathrm{Var}(\eta(\Lambda_n))$ and may be weakened using more elaborate CLTs that continue to hold even with vanishing variance, such as by  Krishnapur, Yogeshwaran \cite{YogeshKrish} and Mastrilli \cite{mastrilli2026asymptotic}.

\subsection{Plan of the Paper}
In Section \ref{sec:prelim}, we introduce our main tools, such as the truncated pair correlation measure $\beta$, and collect necessary well-known facts to be used throughout the paper. Section \ref{sec:int} is devoted to the integrable case, where we prove Theorem \ref{thm:Cov_integrable}. The non-integrable case is addressed in Section \ref{sec:nonint}, starting with the case $d=1$, followed by its extension to $d=2$ and illustrated by examples. Finally, in Section \ref{sec:cumulant_CLT}, we turn to the central limit theorem, for which factorial cumulant measures and the Brillinger-mixing condition will be introduced.
\section{Preliminaries}\label{sec:prelim}   
In this section, we introduce the most frequently used notations and recall a well-known covariance formula as well as a criterion for hyperuniformity. For a detailed introduction to the theory of point processes, we refer e.g.\ to \cite{daley2003introduction} and \cite{last_penrose_2017}.\\
Let $\mathsf{N}(\mathbb{R}^d)$ be the space of simple and locally finite counting measures on $\mathbb{R}^d.$  We endow it with the  $\sigma$-algebra $\mathcal{N}(\mathbb{R}^d),$  the smallest $\sigma$-algebra with respect to which the number statistics  $\mu\mapsto\mu(W)$ are measurable for all $W\in\mathcal{B}(\mathbb{R}^d).$ A  \textit{point process} $\eta$ on $\mathbb{R}^d$ is an $\mathsf{N}(\mathbb{R}^d)$-valued random variable.\\
For  $x\in\mathbb{R}^d$, let  $W+x:=\{x+y:y\in W\}$  and  define the shift transformation   $\theta_x:\mathsf{N}(\mathbb{R}^d)\to \mathsf{N}(\mathbb{R}^d)$ via $\theta_x\mu(W):=\mu(W+x)$ for all $W\in\mathcal{B}(\mathbb{R}^d).$ A point process $\eta$ on $\mathbb{R}^d$ is called {stationary} if the distribution is invariant under shifts, i.e.\ $\theta_x\eta\overset{d}{=}\eta$ for all $x\in\mathbb{R}^d.$  \\
The \textit{intensity measure} of a point process $\eta$ on $\mathbb{R}^d$ is defined by 
$\alpha(W) := \mathbb{E}[\eta(W)],$ $ W \in \mathcal{B}(\mathbb{R}^d).$
For  stationary point processes with locally finite intensity measure, it holds that $\alpha=\vartheta\lambda_d$ for some non-negative $\vartheta\in\mathbb{R}_+.$ In this paper, we always assume unit intensity $\vartheta=1$.

Let  $\eta=\sum_{n=1}^\infty\delta_{X_n}$ a.s. The \textit{$k$-th factorial moment  measure} of $\eta$  is defined by \begin{align*}
	\alpha^k(W):=\mathbb{E}\left[\sum_{n_1,\ldots,n_k\in\mathbb{N}}\!{\hspace{-0.3cm}\vphantom{\sum}}^{\not =}\ind_{(X_{n_1},\ldots, X_{n_k})\in W}\right],\quad W\in\mathcal{B}((\mathbb{R}^d)^k).\end{align*} 
The superscript $\not =$  indicates that the summation is only over distinct $n_1,\ldots,n_k.$ For stationary point processes, we define the \textit{reduced $k$-th factorial moment measure} $\alpha^k_!$ via 
\begin{align} \label{eq:shiftpaircorrelation}
	\int f(x_1,\ldots,x_k)\alpha^k(\mathrm{d}x_1,\ldots,\mathrm{d}x_k)=\int\int f(x_1+y,\ldots,x_{k-1}+y,y)\alpha^k_!(\mathrm{d}x_1,\ldots,\mathrm{d}x_{k-1})\mathrm{d}y,
\end{align}
for all non-negative measurable functions $f:(\mathbb{R}^{d})^k\to\mathbb{R}_+$. Using the reduced second factorial moment measure, we define the \textit{truncated pair correlation measure} 
\begin{align}\label{eq:truncatedpaircorrelationmeasure}
	\beta(\mathrm{d} x):=(\alpha_!^2-\alpha)(\mathrm{d} x),
\end{align}
which is a signed measure that describes the correlation of particles at distance $x$. \\
The truncated pair correlation measure can be used to express the covariance between the number of points in boxes of the form $\Lambda_n(z):=[z_1,z_1+n]\times\dots\times[z_d,z_d+n],$ $(z_1,\dots,z_d)=z\in\mathbb{R}^d.$ Recall that we briefly write $\Lambda_n$ instead of $\Lambda_n(0).$ 
\begin{prop}\label{prop:var}
	Let $\eta$ be a stationary point process with truncated pair correlation measure $\beta.$ It holds that 
	\begin{align}\label{eq:Cov}
		\mathrm{Cov} (\eta(\Lambda_n),\eta(\Lambda_n(nz)))= \lambda_d(\Lambda_n\cap\Lambda_n(nz)) + \int_{\Lambda_n}\beta(\Lambda_n(nz-x))\lambda_d(\mathrm{d}x).
	\end{align}
\end{prop}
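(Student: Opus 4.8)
The plan is to reduce the covariance to the second factorial moment measure $\alpha^2$ and then unfold the definitions of $\alpha_!^2$ and $\beta$. Writing $\eta=\sum_i\delta_{X_i}$ and using that $\eta$ is simple, I would first expand the product of counts by separating the diagonal from the off-diagonal part of the double sum,
\[
\eta(\Lambda_n)\,\eta(\Lambda_n(nz))=\sum_{i}\ind_{X_i\in\Lambda_n\cap\Lambda_n(nz)}+\sum_{i\neq j}\ind_{X_i\in\Lambda_n}\ind_{X_j\in\Lambda_n(nz)} .
\]
Taking expectations, the diagonal term contributes $\E[\eta(\Lambda_n\cap\Lambda_n(nz))]=\lambda_d(\Lambda_n\cap\Lambda_n(nz))$ by the unit-intensity assumption, while the off-diagonal term is by definition $\alpha^2(\Lambda_n\times\Lambda_n(nz))$. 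Since $\E[\eta(\Lambda_n)]\,\E[\eta(\Lambda_n(nz))]=\lambda_d(\Lambda_n)\lambda_d(\Lambda_n(nz))$, this yields the intermediate identity
\[
\Cov(\eta(\Lambda_n),\eta(\Lambda_n(nz)))=\lambda_d(\Lambda_n\cap\Lambda_n(nz))+\alpha^2(\Lambda_n\times\Lambda_n(nz))-\lambda_d(\Lambda_n)\lambda_d(\Lambda_n(nz)).
\]
All terms are finite because $\Lambda_n$ is bounded and $\alpha^2$ is locally finite (implicit in $\beta$ being well defined), so the interchange of summation and expectation is justified.

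Next, I would feed the indicator $f(x_1,x_2)=\ind_{\Lambda_n(nz)}(x_1)\,\ind_{\Lambda_n}(x_2)$ into the reduction formula \eqref{eq:shiftpaircorrelation} with $k=2$; since the factorial moment measure is invariant under relabelling of the two points, $\alpha^2(\Lambda_n\times\Lambda_n(nz))=\alpha^2(\Lambda_n(nz)\times\Lambda_n)$, so this computes the quantity we need. The inner integral against $\alpha_!^2$ evaluates, for fixed base variable $y\in\Lambda_n$, to $\alpha_!^2(\Lambda_n(nz)-y)$, whence
\[
\alpha^2(\Lambda_n\times\Lambda_n(nz))=\int_{\Lambda_n}\alpha_!^2\big(\Lambda_n(nz)-x\big)\,\dd x .
\]
The decisive bookkeeping step is to notice that $\Lambda_n(nz)-x=(nz-x)+\Lambda_n=\Lambda_n(nz-x)$, so the argument is exactly the shifted box of \eqref{eq:Cov}. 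Writing likewise $\lambda_d(\Lambda_n)\lambda_d(\Lambda_n(nz))=\int_{\Lambda_n}\alpha(\Lambda_n(nz-x))\,\dd x$ using $\alpha=\lambda_d$ and translation invariance, and subtracting, the two integrals combine via \eqref{eq:truncatedpaircorrelationmeasure} into $\int_{\Lambda_n}(\alpha_!^2-\alpha)(\Lambda_n(nz-x))\,\dd x=\int_{\Lambda_n}\beta(\Lambda_n(nz-x))\,\dd x$, which is precisely \eqref{eq:Cov}.

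I expect the only genuine difficulty to be orientational rather than analytic: one must apply the reduction formula so that the \emph{base} variable ranges over $\Lambda_n$ while the copy in $\Lambda_n(nz)$ is the one reduced, which is exactly what makes the argument of $\alpha_!^2$ collapse to the translate $\Lambda_n(nz-x)$. Had I instead reduced the $\Lambda_n$-copy, I would have obtained $\alpha_!^2(\Lambda_n-y)$ with $y$ ranging over $\Lambda_n(nz)$, whose argument is a reflected box that matches $\Lambda_n(nz-x)$ only after invoking the reflection symmetry $\beta(\cdot)=\beta(-\,\cdot)$ (itself a consequence of stationarity) together with the central symmetry of $\Lambda_n$. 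Fixing the right anchoring and shift convention from the outset is what lands the computation directly on the stated formula; the residual measurability and integrability of $x\mapsto\beta(\Lambda_n(nz-x))$ over $\Lambda_n$ then follow routinely from the local finiteness of $\alpha^2$ by Tonelli applied to the positive and negative parts of $\beta$.
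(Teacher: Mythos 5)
Your proof is correct and follows essentially the same route as the paper: the diagonal/off-diagonal split of the double sum is exactly the content of the second-order Campbell-type formula the paper invokes, and both arguments then reduce $\alpha^2$ via \eqref{eq:shiftpaircorrelation} and recombine with the product of means through $\beta=\alpha_!^2-\alpha$. The only cosmetic difference is that the paper first states the general covariance identity \eqref{eq:400} for arbitrary $f,g$ and then specializes to indicators, whereas you work with the indicators from the start.
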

\begin{proof}
	Let $f,g:\mathbb{R}^d\to\mathbb{R}_+$ be positive-valued measurable functions. Using Campbell's formula
	\begin{align*}\mathbb{E}\left[\int_{\mathbb{R}^d}f(x)\eta(\mathrm{d}x)\right]=\int_{\mathbb{R}^d}f(x)\alpha({\mathrm{d}}x),
	\end{align*}
	the Campbell-type formula of the second order 
	\begin{align*} 
		\mathbb{E}\left[\int_{\mathbb{R}^d}f(x)\eta({\mathrm{d}}x)\int_{\mathbb{R}^d}g(y)\eta({\mathrm{d}}y)\right]=\int_{\mathbb{R}^d}f(x)g(x)\alpha({\mathrm{d}}x)+\int_{\mathbb{R}^d\times\mathbb{R}^d}f(x)g(y)\alpha^2({\mathrm{d}}x,\mathrm{d}y),
	\end{align*} and equation \eqref{eq:shiftpaircorrelation}, we obtain that
	
	\begin{align} \label{eq:400}
		\mathrm{Cov}\left[\int f(x)\eta(\mathrm{d}x), \int g(x)\eta(\mathrm{d}x)\right]=\int_{\mathbb{R}^d}f(x)g(x)\alpha({\mathrm{d}}x)+\int_{\mathbb{R}^d}\int_{\mathbb{R}^d}f(x)g(y+x)\beta(\mathrm{d}y)\alpha(\mathrm{d}x).
	\end{align}
	Inserting  $f=\ind_{\Lambda_n}$ and 
    $g=\ind_{\Lambda_n(nz)}$ in this formula proves  the claim.
\end{proof}

We finish this section by recalling a well-known criterion for hyperuniformity (see e.g.~\cite{Martin1980TheCF, Coste}), using the {truncated pair correlation measure}. 
\begin{prop}\label{prop:hyperuniformity}
	Let $\eta$ be a stationary point process on $\mathbb{R}^d$ with a truncated pair correlation measure $\beta$ s.t.\ $|\beta|(\mathbb{R}^d)<\infty$. The following are equivalent:
	\begin{enumerate}[(a)]
		\item The point process $\eta$ is hyperuniform, i.e.~\eqref{eq:HU} holds for boxes, balls or arbitrary convex sets. 
		\item It holds that $1+\beta(\mathbb{R}^d)=0$.
	\end{enumerate}
\end{prop}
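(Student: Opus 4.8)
The plan is to prove the equivalence of hyperuniformity and $1+\beta(\R^d)=0$ directly from the covariance formula of Proposition \ref{prop:var} in the diagonal case $z=0$. Setting $z=0$ in \eqref{eq:Cov} gives
\begin{align*}
\Var(\eta(\Lambda_n))=\lambda_d(\Lambda_n)+\int_{\Lambda_n}\beta(\Lambda_n-x)\,\lambda_d(\dd x)=n^d+\int_{\Lambda_n}\beta(\Lambda_n-x)\,\lambda_d(\dd x).
\end{align*}
The whole argument reduces to analyzing the asymptotics of the integral term and showing that, after dividing by $\lambda_d(\Lambda_n)=n^d$, it converges to $\beta(\R^d)$. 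Concretely, I would write the integral term as $\int_{\R^d}\beta(\dd y)\,\lambda_d\big(\Lambda_n\cap(\Lambda_n-y)\big)$ by Fubini (justified by $|\beta|(\R^d)<\infty$), so that
\begin{align*}
\frac{\Var(\eta(\Lambda_n))}{n^d}=1+\int_{\R^d}\frac{\lambda_d\big(\Lambda_n\cap(\Lambda_n-y)\big)}{n^d}\,\beta(\dd y).
\end{align*}

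First I would record the key pointwise limit of the integrand. For fixed $y=(y_1,\dots,y_d)$, the overlap volume factors as $\lambda_d(\Lambda_n\cap(\Lambda_n-y))=\prod_{i=1}^d(n-|y_i|)_+$, hence $\lambda_d(\Lambda_n\cap(\Lambda_n-y))/n^d=\prod_{i=1}^d(1-|y_i|/n)_+\to 1$ as $n\to\infty$ for every fixed $y$. Moreover this ratio is bounded by $1$ uniformly in $n$ and $y$. Since $|\beta|(\R^d)<\infty$, dominated convergence applies with dominating function $\mathbf 1$, and the integral converges to $\int_{\R^d}\beta(\dd y)=\beta(\R^d)$. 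Therefore $\Var(\eta(\Lambda_n))/n^d\to 1+\beta(\R^d)$, and the equivalence (a)$\iff$(b) for boxes is immediate: the limit is $0$ precisely when $1+\beta(\R^d)=0$.

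To handle the remaining shapes in (a) — balls and arbitrary convex bodies $K$ — I would run the identical computation with $\Lambda_n$ replaced by $nK$. The covariance formula (valid for any bounded Borel set, since the proof of Proposition \ref{prop:var} only used indicator test functions) gives $\Var(\eta(nK))/\lambda_d(nK)=1+\lambda_d(nK)^{-1}\int_{\R^d}\lambda_d(nK\cap(nK-y))\,\beta(\dd y)$. The integrand $\lambda_d(nK\cap(nK-y))/\lambda_d(nK)$ is again bounded by $1$ and converges pointwise to $1$ for each fixed $y$ (the translate $nK-y$ differs from $nK$ by a boundary layer of relative volume $\to 0$, using $\lambda_d(\partial K)=0$ for convex $K$), so dominated convergence yields the same limit $1+\beta(\R^d)$, independent of the shape. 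This shows the hyperuniformity condition \eqref{eq:HU} is equivalent to $1+\beta(\R^d)=0$ simultaneously for all these test families.

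The main obstacle is purely the interchange-and-limit bookkeeping: I must justify Fubini to rewrite the $\lambda_d$-integral over $\Lambda_n$ as a $\beta$-integral over $\R^d$, and then justify dominated convergence despite $\beta$ being a \emph{signed} measure. Both are handled cleanly by the standing hypothesis $|\beta|(\R^d)<\infty$, which lets me work with the total variation $|\beta|$ as the finite dominating measure throughout, so no delicate cancellation arguments are needed. I expect no genuine difficulty beyond verifying the pointwise limit and uniform bound of the normalized overlap volume, which are elementary.
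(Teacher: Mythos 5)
Your proof is correct. Note that the paper itself gives no proof of Proposition \ref{prop:hyperuniformity}, stating it as a well-known criterion with references to \cite{Martin1980TheCF, Coste}; your argument --- rewriting $\int_{\Lambda_n}\beta(\Lambda_n-x)\,\lambda_d(\dd x)$ as $\int\lambda_d(\Lambda_n\cap(\Lambda_n-y))\,\beta(\dd y)$ and applying dominated convergence against the finite measure $|\beta|$ --- is the standard one and is exactly the Fubini swap the authors use later in the proof of Theorem \ref{thm:Cov_integrable}, so it fills the omitted proof in the spirit of the paper. The only point worth making explicit is that for ``arbitrary convex sets'' one should restrict to convex bodies of positive volume, for which $\lambda_d(\partial K)=0$ justifies the pointwise convergence of the normalized overlap.
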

Note that the definition of hyperuniformity in \eqref{eq:HU} is sometimes stated using balls instead of boxes; however, Proposition \ref{prop:hyperuniformity} verifies that in the setting of this work, both definitions are equivalent.

\begin{rmk}\label{rem:sum=0} Let us comment on the total sum of covariances. For instance, in the integrable case $\int|y||\beta|(\mathrm{d} y)<\infty$, we have $\sum_{z\in\mathbb{Z}^d}\mathrm{cov}(z)= 1-\sum_{z\in\mathbb{Z}^d:|z|=1} \frac{1}{2d}=0$. Similarly, in the non-integrable case of Theorem \ref{thm:Cov_non_int}, it is also easy to verify that $\sum_{z\in\mathbb{Z}}\mathrm{cov}(z)=0$.

In general, $\sum_{z\in\mathbb{Z}}\mathrm{cov}(z)=0$ can be explained by formally setting $g=1$ in \eqref{eq:400}. More rigorously, in the case of $|\beta|(\mathbb{R}^d)<\infty$ and hyperuniformity $\beta(\mathbb{R}^d)=-1$,  we set $g=\ind_{[-m_n,m_n]^d}$ for $m_n\gg n$ sufficiently large such that
\begin{align*}
 \Big|\mathrm{Cov}\big( \eta(\Lambda_n),\eta([-m_n,m_n]^d)\big)
\Big| &=\Big|\lambda_d(\Lambda_n ) + \int_{\Lambda_n}\beta([-m_n,m_n]^d-x)\lambda_d(\mathrm{d}x)\Big|\\
 &\le |\beta|\big(([-m_n+n,m_n-n]^d)^c\big)\cdot\lambda_d(\Lambda_n)\to 0.
\end{align*}
Assuming existence of $\mathrm{cov}(z)$ and uniform summability in $z$, we get as $n\to\infty$
	\begin{align*}		\lim_{n\to\infty}\sum_{z\in\mathbb{Z}^d\cap [-m_n/n,m_n/n]^d}\mathrm{cov}(z)&=\lim_{n\to\infty} \sum_{z\in\mathbb{Z}^d\cap [-m_n/n,m_n/n]^d}\frac{\mathrm{Cov}\big( \eta(\Lambda_n),\eta(\Lambda_n(nz))\big)}{\mathrm{Var}\big(\eta(\Lambda_n)\big)}\\
    &=\lim_{n\to\infty} \frac{\mathrm{Cov}\big( \eta(\Lambda_n),\eta([-m_n,m_n]^d)\big)}{\mathrm{Var}\big(\eta(\Lambda_n)\big)}=0.
	\end{align*} 

Conversely, if the series $\sum_{z\in\mathbb{Z}^d}\mathrm{cov}(z)$ does not absolutely converge, then \emph{both} positive and negative parts $\sum_{z\in\mathbb{Z}^d}\mathrm{cov}(z)_+=\sum_{z\in\mathbb{Z}^d}\mathrm{cov}(z)_-=\infty$ must diverge. Indeed, assume to the contrary  that one of the two is finite, then again by splitting $\Lambda_{n^2}$ into boxes $\Lambda_n$, 
\begin{align}
1&=\sum_{v,w\in\mathbb{Z}^d\cap \Lambda_{n}}\frac{\mathrm{Cov}\big(\eta(\Lambda_n(nv)),\eta(\Lambda_n(nw))\big)}{\mathrm{Var}(\eta(\Lambda_{n^2}))}\nonumber\\
&= \frac{\frac 1{ n^d} \mathrm{Var}(\eta(\Lambda_{n}))}{\frac 1 {n^{2d}}\mathrm{Var}(\eta(\Lambda_{n^2}))} \sum_{z\in\mathbb{Z}^d\cap[-n,n]^d}\frac{\mathrm{Cov}\big(\eta(\Lambda_n),\eta(\Lambda_n(nz))\big)}{\mathrm{Var}(\eta(\Lambda_{n}))} \prod_{i=1}^d\left(1-\frac{|z_i|}n\right)_+,\label{eq:MK}
\end{align}
where counting all $z=v-w\in[-n,n]^d$ for $v,w\in\mathbb{Z}^d\cap \Lambda_n$ leads to the pyramidal function $n^d\prod_{i=1}^d (1-\frac{|z_i|}n)_+\in[0,1]$ (which we are going to use again in our main proofs below). The first factor in the last line is bounded from below infinitely often (generically, it diverges to $\infty$), the first term in the sum becomes $\mathrm{cov}(z)$ and the pyramidal function converges pointwise to $1$. Hence, (formally, up to uniform summability of $\mathrm{Cov}\big(\eta(\Lambda_n),\eta(\Lambda_n(nz))\big)/{\mathrm{Var}(\eta(\Lambda_{n}))}$), we observe that \eqref{eq:MK} has the same limit value as $\sum_{z\in\mathbb{Z}^d}\mathrm{cov}(z)= \pm\infty$ which is a contradiction.
\end{rmk}

The above remark shows that there exists some $\mathrm{cov}(z)<0$ since $\mathrm{cov}(0)=1>0$.
Typically, we expect $\mathrm{cov}(z)\le 0$ for any boxes with disjoint interior (i.e.~for $z\notin [-1,1]^d$) and in some more general setting. The following remark gives a glimpse on this general question without assuming $|\beta|(\mathbb{R}^d)<\infty$.

\begin{rmk}
Let $\eta$ be a hyperuniform point process and assume that $\mathrm{cov}(z)$ exists for all $z\in\mathbb{Z}^d$. Then there exists $z\in\mathbb{Z}^d\setminus \{0\}$ such that $\mathrm{cov}(z)<0$. 
	
Indeed, assume, to the contrary, that $\mathrm{cov}(z)\ge 0$ for all $z\in\mathbb{Z}^d\setminus \{0\}$.
	Let $c_n:=n^d/\mathrm{Var}(\eta(\Lambda_n))$ and note that $c_n=\mathcal O (n)$ goes to infinity for $n\to\infty$. We partition $\Lambda_n$ into $m=\lceil n/\sqrt {c_n}\rceil^d$ smaller boxes $\Lambda_{n/\lceil n/\sqrt {c_n}\rceil}(z_i)$ for $z_i\in n/\lceil n/\sqrt {c_n}\rceil\cdot \mathbb Z^d\cap \Lambda_n$. In particular, $\eta(\Lambda_n)=\sum_{i=1}^m\eta(\Lambda_{n/\lceil n/\sqrt {c_n}\rceil}(z_i))$. Hence,
	\begin{align*}
		1&= \limsup_{n\to\infty}\frac{c_n\mathrm{Var}(\eta(\Lambda_n))}{n^d}\\
        &=\limsup_{n\to\infty}\frac{c_n}{n^d}\Big( \sum_{i=1}^m\mathrm{Var}(\eta(\Lambda_{n/\lceil n/\sqrt {c_n}\rceil}(z_i)))+2\sum_{i<j}^m \mathrm{Cov}(\eta(\Lambda_{n/\lceil n/\sqrt {c_n}\rceil}(z_i)),\eta(\Lambda_{n/\lceil n/\sqrt {c_n}\rceil}(z_j))) \Big)\\
		&\ge\limsup_{n\to\infty} \frac{c_n}{n^d} m \frac{n^d/\lceil n/\sqrt {c_n}\rceil^d}{c_{n/\lceil n/\sqrt {c_n}\rceil}}\\
        &=\limsup_{n\to\infty}\frac{c_n}{c_{n/\lceil n/\sqrt {c_n}\rceil}}\\
        &\ge  \limsup_{n\to\infty}\frac{\sqrt{ c_n}}{C}=\infty,
	\end{align*}
    where the last two lines follow from $c_{n/\lceil n/\sqrt {c_n}\rceil}\sim c_{\sqrt c_n}\le C \sqrt {c_n}$ for some constant $C>0$ and $n$ sufficiently large.
	 This is a contradiction, proving the claim.
\end{rmk}

\section{The integrable case}\label{sec:int}

In this section, we show that for hyperuniform point processes satisfying the integrability assumption 
\begin{align}\label{eq:IA}\tag{IA}
	\int|y||\beta|(\mathrm{d} y)<\infty.
\end{align}
As announced in the introduction, we assume $\beta$ to be \emph{box-symmetric}, that is, symmetric with respect to its coordinates, i.e.~$\beta(\mathrm{d} x_1,\dots,\mathrm{d} x_d)=\beta(\mathrm{d} x_{\sigma(1)},\dots, \mathrm{d} x_{\sigma(d)})$ for all permutations $\sigma$, and symmetric with respect to reflection of individual coordinates, i.e.~$\beta(\mathrm{d} x_1,\dots,\mathrm{d} x_d)=\beta(\mathrm{d} (-x_1),\dots, \mathrm{d} x_d)$. For example, if $\beta$ is rotationally invariant, then it is box-symmetric.\\
As claimed in Theorem \ref{thm:Cov_integrable}, we will show that the covariance structure is given by 
\begin{align*}
	\mathrm{cov}(z)=\begin{cases}-\frac 1 {2d}\lambda_{d-1}(\partial \Lambda_1\cap \partial \Lambda_1(z)),\quad &\text{if } \mathring\Lambda_1\cap \mathring\Lambda_1(z)=\emptyset,\\
		\frac 1 {2d}\lambda_{d-1}(\partial \Lambda_1\cap \partial \Lambda_1(z)),\quad &\text{if }\mathring\Lambda_1\cap \mathring\Lambda_1(z)\neq\emptyset,
	\end{cases}
\end{align*}
where $\partial \Lambda_1(z)$ is the boundary of $\Lambda_1(z)$ and $\mathring\Lambda_1(z)$ is the interior of $\Lambda_1(z)$.
Prominent examples that meet the assumption \eqref{eq:IA} include the Ginibre process on $\mathbb{R}^2\simeq\mathbb{C}$ with $\beta(\mathrm{d} z)=-\frac 1 \pi \exp({-|z|^2})\mathrm{d} z$, GAF-zeros (for $\beta$ see for instance \cite[Equation (2.11)]{YogeshKrish}) and determinantal point processes with $\beta$ depending on the kernels.

\begin{proof}[Proof of Theorem \ref{thm:Cov_integrable}]
	By symmetry, it is sufficient to consider points $z=(z_i)_{i=1,\dots,d}$ located in the first quadrant $\Lambda_\infty:=[0,\infty)^d$.
	We begin with the formula \eqref{eq:Cov}
	\begin{align}\label{eq:Cov1}
		\mathsf{C}_n(z):=\frac {\mathrm{Cov}\big( \eta(\Lambda_n),\eta(\Lambda_n(nz))\big)}{n^d}=\lambda_d(\Lambda_1\cap \Lambda_1(z))+\frac 1 {n^d}\int_{\Lambda_n}\beta(\Lambda_n(nz-x))\lambda_d(\mathrm{d} x).
	\end{align}
	We swap the order of integration in the second term and obtain
	\begin{align*}
		\frac 1 {n^d}\int_{\Lambda_n}\beta(\Lambda_n(nz-x))\lambda_d(\mathrm{d} x)=\int \lambda_d(\Lambda_1\cap \Lambda_1(z-y/n))\beta(\mathrm{d} y).
	\end{align*}
	These boxes have larger overlap whenever the coordinates $y_i/n$ are close to $z_i$ and no overlap, when $|z_i-y_i/n|>1$. More precisely, 
	$$\lambda_d(\Lambda_1\cap \Lambda_1(z-y/n))=\prod_{i=1}^d\big(1-\big|z_i-\frac{y_i}n\big\rvert\big)_+,$$ where $x_+=\max(0,x)$. Thus, equation \eqref{eq:Cov1} can be rephrased as
	\begin{align}\label{eq:Cov2}
		\mathsf{C}_n(z)=\prod_{i=1}^d(1-z_i)_++\int\prod_{i=1}^d\big(1-\big|z_i-\frac{y_i}n\big\rvert\big)_+\beta(\mathrm{d} y).
	\end{align}
	We expect this term to be of order $n^{-1}$ as the variance, and will now analyse it in cases ordered by increasing complexity.\\
	\textsc{Case 1, separated boxes:} Assume that $z_j>1$ for some coordinate $z_j$ of $z$, then the first term in \eqref{eq:Cov2} vanishes and the second can be bounded by
	\begin{align*}
		|\mathsf{C}_n(z)|\le \int \frac {y_j}n \ind_{[(z_j-1)n,\infty)}(y_j) |\beta|(\mathrm{d} y)=o(n^{-1}).
	\end{align*}
	The first inequality follows from $(1-|z_j-y_j/n|)_+\le y_j/n $ for $y_j\ge (z_j-1)n$ 
	(and zero, else) and the second bound follows from our integrability assumption $\int|y||\beta|(\mathrm{d} y)<\infty$. 
	Below, in Case 4, we will verify that $\mathrm{Var}\big(\eta(\Lambda_n)\big)$ is of order $n^{d-1}$ from which it follows that $\mathrm{cov}(z)=0$ 
	as claimed.\\
	\textsc{Case 2, at the edge:}
	Assume that $z_j=z_k=1$ for at least two coordinates of $z$, then the first term in \eqref{eq:Cov2} vanishes, and the second can again be bounded by
	\begin{align*}
		|\mathsf{C}_n(z)| &\le \int \frac {y_jy_k}{n^2} \ind_{[0,2n]}(y_j)\ind_{[0,2n]}(y_k) |\beta|(\mathrm{d} y)\\
		&=\frac 1{n^2} \int y_j \big(y_k\ind_{[0,\sqrt n]}(y_k)+y_k\ind_{[\sqrt n,2n]}(y_k)\big) |\beta|(\mathrm{d} y)\\
		& \le\frac 1{n^2} \int |y| \big(\sqrt n+2n\ind_{[\sqrt n,\infty)}(y_k)\big) |\beta|(\mathrm{d} y)=o(n^{-1}).
	\end{align*}
	\\
	\textsc{Case 3, face to face:}
	Assume that exactly one coordinate satisfies $z_j=1$ and all others satisfy $0\le z_i<1$. Again, the first term in \eqref{eq:Cov2} vanishes and the remainder is
	\begin{align*}
		\mathsf{C}_n(z)=\int\frac{y_j}n\ind_{[0,n]}(y_j)\prod_{i\neq j}\big(1-\big|z_i-\frac{y_i}n\big\rvert\big)_+ \beta(\mathrm{d} y)+o(n^{-1}),
	\end{align*}
	where we used that the integral $\int \frac {y_j}n \ind_{[n,2n]}(y_j)|\beta|(\mathrm{d} y)$ is negligible as argued in Case 1. Observe that each factor in the product is
	\begin{align}\label{eq:produkt}
		\big(1-\big|z_i-\frac{y_i}n\big\rvert\big)_+=\big(1-z_i+\frac{y_i}n\big)\ind_{[n(z_i-1),nz_i]}(y_i)+\big(1+z_i-\frac{y_i}n\big)\ind_{[nz_i,n(z_i+1)]}(y_i).
	\end{align}
	When expanding the product, all terms containing at least one $\frac{y_i}n$ are negligible, as they are multiplied by another $y_j$ analogous to Case 2. Thus,
	$$\mathsf{C}_n(z)=\int\frac{y_j}n\ind_{[0,n]}(y_j)\prod_{i\neq j}\Big((1-z_i) \ind_{[n(z_i-1),nz_i]}(y_i)+(1+z_i)\ind_{[nz_i,n(z_i+1)]}(y_i)\Big)\beta(\mathrm{d} y)+o(n^{-1}).$$
	Moreover, we may extend the area of integration to $y_i\in\mathbb{R}$, since the remaining parts $(-\infty,n(z_i-1)]$ and $[n(z_i+1),\infty)$ are negligible as in Case 1. Indeed, for any $c>0$
	\begin{align}\label{eq:nocheineabschaetzung}
		\int\frac{y_j}n\ind_{[0,n]}(y_j) \ind_{[cn,\infty)}(y_i)\beta(\mathrm{d} y)\le   \frac 1 n \int|y| \ind_{[cn,\infty)}(y_i)\beta(\mathrm{d} y)=o(n^{-1}).
	\end{align}
	
	Lastly, note that if $z_i=0$, then its sign does not matter. Consequently, we arrive at
	\begin{align}
		\mathsf{C}_n(z)&=\int\frac{y_j}n\ind_{[0,n]}(y_j)\prod_{i\neq j}\Big((1-z_i) \ind_{(-\infty,nz_i]}(y_i)+(1+z_i)\ind_{[nz_i,\infty)}(y_i)\Big)\beta(\mathrm{d} y)+o(n^{-1})\nonumber\\
		&=\int\frac{y_j}n\ind_{[0,n]}(y_j)\prod_{i\neq j}\big(1-z_i\big) \beta(\mathrm{d} y)+o(n^{-1})\nonumber\\
		&=\frac {\prod_{i\neq j}\big(1-z_i\big)} {2n}\int|y_1| \beta(\mathrm{d} y)+o(n^{-1})\label{eq:Cov3}
	\end{align}
	by symmetry. Observe that $\prod_{i\neq j}\big(1-z_i\big)=\lambda_{d-1}(\partial \Lambda_1\cap \partial \Lambda_1(z))$ as in the claim, and it remains to identify the integral in the variance term of the denominator.
	\\
	\textsc{Case 4, overlapping bodies:} Assume that $0\leq z_i<1$, denote by $k=0,\dots,d$ the number of coordinates $z_i=0$ and by symmetry we suppose these to be the first $k$ coordinates $z_1,\dots,z_k=0$. Taking into account the first summand of \eqref{eq:Cov2}, we consider
	\begin{align*}
		\mathsf{C}_n(z)=\prod_{i=k+1}^d(1-z_i)+\int\prod_{j=1}^k\big(1-\frac{|y_j|}n\big)_+\prod_{i=k+1}^d\big(1-\big|z_i-\frac{y_i}n\big\rvert\big)_+ \beta(\mathrm{d} y).
	\end{align*}
	Note again that, similar to Case 1,
	$$\int \ind_{[nz_i,\infty)}(y_i)|\beta |(\mathrm{d} y)\le \int \ind_{[nz_i,\infty)}(y_i)\frac {|y|}  {z_in} |\beta|(\mathrm{d} y)=o(n^{-1})$$
	for any $z_i>0$ by Markov's inequality. Together with \eqref{eq:produkt} and \eqref{eq:nocheineabschaetzung}, it follows
	\begin{align*}
		\mathsf{C}_n(z)=&\prod_{i=k+1}^d(1-z_i)+\int\prod_{j=1}^k\big(1-\frac{|y_j|}n\big)_+\prod_{i=k+1}^d\big(1-z_i+\frac{y_i}n \big)\beta(\mathrm{d} y)+o(n^{-1}).
	\end{align*}
	By box-symmetry of $\beta$, it moreover holds
	\begin{align*}
		\int y_i\prod_{j=1}^k\big(1-\frac{|y_j|}n\big)_+\beta(\mathrm{d} y)=0.
	\end{align*}
	By expanding the products and using again the argument of Case 2, we end up with
	\begin{align}
		\mathsf{C}_n(z)&=\prod_{i=k+1}^d(1-z_i)+\int\prod_{j=1}^k\big(1-\frac{|y_j|}n\big)_+\prod_{i=k+1}^d\big(1-z_i\big)\beta(\mathrm{d} y)+o(n^{-1})\nonumber\\
		&=\prod_{i=k+1}^d(1-z_i)\Big(1+\int\big(1-\sum_{j=1}^k \frac{|y_j|}n\big) \beta(\mathrm{d} y)\Big) +o(n^{-1})\nonumber\\
		&=-\prod_{i=k+1}^d(1-z_i)k\int \frac{|y_1|}n \beta(\mathrm{d} y) +o(n^{-1}).\label{eq:Cov4}
	\end{align}
	Finally, choosing $z=0$ corresponding to $k=d$, we obtain
	\begin{align}\label{eq:Var2}
		\mathrm{Var}\big(\eta(\Lambda_n)\big)\sim -n^{d-1}d\int|y_1| \beta(\mathrm{d} y).\end{align}
	\textsc{Conclusion:} Combining \eqref{eq:Var2} with \eqref{eq:Cov3}, we obtain the first claimed equation in Theorem \ref{thm:Cov_integrable}. The second equation follows from \eqref{eq:Cov4} and counting $2k$ overlapping faces of area $\prod_{i=k+1}^d(1-z_i)$ each. Ultimately, if boxes overlap without touching faces, we have $k=0$ in \eqref{eq:Cov4} and then the limit vanishes.
\end{proof}

\begin{rmk} \label{rem:rectangles_integrable}
    In the preceding theorem, and throughout the remainder of the paper, we focus on boxes with equal side length. However, the result  of Theorem \ref{thm:Cov_integrable} can be extended to rectangular boxes\footnote{Equivalently, the result can be extended to $\beta$ that is not invariant with respect to permutation of the coordinates, simply by rescaling the coordinates by $b_i$.} of the form $\tilde \Lambda_n^b(nz)=nz+\times_{i=1}^d[0,nb_i]$ for $(b_1,\ldots, b_d)=b,z\in\mathbb{R}^d$, by following exactly the same line of argumentation as in the proof of Theorem \ref{thm:Cov_integrable}. Without symmetry arguments, all resulting formulas then depend on $b$ via the amount of the boundary overlap,  for instance \eqref{eq:Cov4} for $z_i=0$ for $i=1,\dots, k$ and $0<z_i<b_i$ for $i=k+1\dots, d$ becomes 
    $$\mathsf{C}_n(z)=-\prod_{i=1}^d(b_i-z_i)\sum_{j=1}^k\frac 1 {b_j}\int \frac{|y_1|}{n}\beta (\mathrm d y)+o(n^{-1})=-\frac {\lambda_{d-1}(\partial \tilde\Lambda_1^b\cap \partial \tilde\Lambda_1^b(z))}2 \int \frac{|y_1|}{n}\beta (\mathrm d y)+o(n^{-1}).$$ Consequently, for any $z\in\Lambda_\infty$ the relative covariance becomes the relative boundary overlap
    \begin{align*}
        \mathrm{cov}^b(z):=
        \lim_{n\to\infty}\frac{\mathrm{Cov}\big( \eta(\tilde\Lambda_n^b),\eta(\tilde\Lambda_n^b(nz))\big)}{\mathrm{Var}\big(\eta(\tilde\Lambda_n^b))\big)}=
        \begin{cases}
        -\frac {\lambda_{d-1}(\partial \tilde\Lambda_1^b\cap \partial \tilde\Lambda_1^b(z))} {\lambda_{d-1}(\partial \tilde\Lambda_1^b)},\quad &\text{if } \mathring{\tilde{\Lambda}}_1^b\cap \mathring{\tilde\Lambda}_1^b(z)=\emptyset,\\
		\ \frac {\lambda_{d-1}(\partial {\tilde\Lambda_1^b}\cap \partial \tilde\Lambda_1^b(z))} {\lambda_{d-1}(\partial \tilde\Lambda_1^b)},\quad &\text{if }\mathring{\tilde{\Lambda}}_1^b\cap \mathring{\tilde\Lambda}_1^b(z)\neq\emptyset.
	\end{cases}
    \end{align*}
    Since this extension provides no additional conceptual insight and would make the proof unnecessarily technical, we omit the details.
\end{rmk}

\section{The non-integrable case} \label{sec:nonint}
The integrability assumption \eqref{eq:IA} is significantly stronger than the condition $ |\beta|(\mathbb{R}^d) < \infty $, which is required for the hyperuniformity criterion in Proposition \ref{prop:hyperuniformity}. In this section, we study the covariance structures of hyperuniform point processes that do not satisfy \eqref{eq:IA} but $ |\beta|(\mathbb{R}^d) < \infty $. We begin with the proof of Theorem \ref{thm:Cov_non_int}, which gives a complete description of the covariance structure for such point processes in dimension $d=1$. Thereafter, we extend our study to $ d = 2 $ (Theorem \ref{thm:Cov_non_int_2}) as a model case for higher dimensions. At the end of this section, we construct and discuss several examples of point processes that meet the assumptions of Theorem \ref{thm:Cov_non_int} and Theorem \ref{thm:Cov_non_int_2}.
\subsection{The 1-dimensional case}
In this section, we prove Theorem \ref{thm:Cov_non_int}, which gives a full characterization of the covariance structure beyond the integrability assumption \eqref{eq:IA} for $d=1$. To this end, define $F(x):=\beta([x,\infty))$ and consider the point processes for which $H(y):=-\int_0^y F(x)\mathrm{d}x$ is regularly varying with parameter $a\in[0,1]$, i.e.
\begin{align}\label{eq:regvarr}
	\lim_{n\to\infty} \frac{H(xn)}{H(n)}=x^a,\quad \text{ for all } x\in\mathbb{R}.
\end{align} 
Among other things, we prove that the covariance structure is given by 
\begin{align}\label{eq:Cov_non_int}
	\mathrm{cov}(z)=
	\frac{|z-1|^{a}}{2}+ \frac{|z+1|^{a}}{2}-|z|^{a}, \quad z\in\mathbb{R},
\end{align}
for $a\in (0,1]$ or $\mathrm{cov}(z)=\delta_0(z)-\tfrac 1 2 \delta_{-1}(z)-\tfrac 12\delta_{+1}(z)$ for $a=0$.

\begin{proof}[Proof of Theorem \ref{thm:Cov_non_int}]
	We begin with a preparation, along the same route as in \eqref{eq:Cov1}, to obtain
	\begin{align*}
		\mathrm{Cov}(\eta(\Lambda_n), \eta(\Lambda_n(nz)))&=\lambda_1([0,n]\cap [zn, n(z+1)])+ \int_{\Lambda_n}\beta([zn-x,n(z+1)-x]) \mathrm{d}x\nonumber\\
		&=n(1-|z|)_++ \int_0^n F(zn-x)-F(n(z+1)-x) \mathrm{d}x\nonumber\\
		&= n(1-|z|)_+ + \int^{nz}_{n(z-1)}F(x) \mathrm{d}x-\int^{n(z+1)}_{nz}F(x) \mathrm{d}x\\
		&= n(1-|z|)_+-\int^{n(z-1)}_{0}F(x) \mathrm{d}x + 2\int^{nz}_{0}F(x) \mathrm{d}x-\int^{n(z+1)}_{0}F(x) \mathrm{d}x.
	\end{align*}
	Combining hyperuniformity and the symmetry of $\beta$ yields  
	\begin{align}\label{eq:hyp+sym}
		-1=F(-x)+F(x),\ x\in\mathbb{R}.
	\end{align}
	Therefore,  if $z<0,$ then  
	\begin{align*}
		& -\int^{n(z-1)}_{0}F(x) \mathrm{d}x + 2\int^{nz}_{0}F(x) \mathrm{d}x-\int^{n(z+1)}_{0}F(x)\mathrm{d}x\\&\quad= \int^{n(z-1)}_{0}1+F(-x) \mathrm{d}x - 2\int^{nz}_{0}1+F(-x) \mathrm{d}x-\int^{n(z+1)}_{0}1+F(-x)\mathrm{d}x\\
		&\quad= -\int^{n(|z|-1)}_{0}F(x) \mathrm{d}x + 2\int^{n|z|}_{0}F(x) \mathrm{d}x-\int^{n(|z|+1)}_{0}F(x)\mathrm{d}x.
	\end{align*}
	Further, if $|z|<1$, then the first two terms can be rewritten by using again \eqref{eq:hyp+sym} as
	$$n(1-|z|)_+-\int^{n(|z|-1)}_{0}F(x) \mathrm{d}x=\int_{n(|z|-1)}^01+F(x)\mathrm{d}x=-\int_{n(|z|-1)}^0F(-x)\mathrm{d}x=-\int_0^{n||z|-1|}F(x)\mathrm{d}x.$$
	All in all, we obtain
	\begin{align}\label{eq:Cov_non_int2}
		\mathrm{Cov}(\eta(\Lambda_n), \eta(\Lambda_n(nz)))= 2\int^{n|z|}_{0}F(x) \mathrm{d}x-\int_0^{n||z|-1|}F(x)\mathrm{d}x-\int^{n(|z|+1)}_{0}F(x) \mathrm{d}x.
	\end{align}
	Setting $z=0$, the equality in claim (i) follows
	\begin{align}\label{eq:var}
		\mathrm{Var}(\eta(\Lambda_n))= -2\int_{0}^n F(x) \mathrm{d}x.
	\end{align}
	Now, we assume (i), that is integrals $H(y)=-\int_0^yF(x)\mathrm d x$ to be regularly varying with parameter $a\in[0,1]$. Then, we conclude (ii): If $a\in(0,1]$, then
	\begin{align*}
		\mathrm{cov}(z)=\lim_{n\to\infty}\frac{\mathrm{Cov}(\eta(\Lambda_n), \eta(\Lambda_n(zn)))}{\mathrm{Var}(\eta(\Lambda_n))}=
		\frac{||z|-1|^{a}}{2}+ \frac{(|z|+1)^{a}}{2}-|z|^{a}=
		\frac{|z-1|^{a}}{2}+ \frac{|z+1|^{a}}{2}-|z|^{a}.
	\end{align*}
	For the slowly varying case $a=0$, we obtain $\mathrm{cov}(0)=1$, $ {\mathrm{cov}(1)=\mathrm{cov}(-1)=-1/2}$, and $\mathrm{cov}(z)=0$ for $z\notin \{-1,0,1\}$.
	
	The implication (ii) to (iii) is trivial.
	
	Assume (iii), that the limit
	$$ \mathrm{cov}(z)= \lim_{n\to\infty}\frac{\mathrm{Cov}(\eta(\Lambda_n), \eta(\Lambda_n(zn)))}{\mathrm{Var}(\eta(\Lambda_n))} =\lim_{n\to\infty}\frac {-2H(nz)+H(n|z-1|)+H(n(z+1))}{2H(n)}<\infty$$
	exists for $z=1, z=2$. Since $H(0)=0$, it follows from $z=1$ that $\lim_{n\to\infty}\frac{H(2n)}{H(n)}<\infty$ and consequently, it follows from $z=2$ that  $\lim_{n\to\infty}\frac{H(3n)}{H(n)}<\infty$. Our additional assumption $F(x)\le 0$ (or $\ge 0$) for $x$ sufficiently large implies that $\lim_{n\to\infty}\frac{H((z+1)n)}{H(n)}\ge 1$  for $z=1,2$. Hence, we can apply Karamata's theorem \cite[Theorem 1.10.2]{regvar}: If $H$ is positive and monotone on some neighborhood of infinity and $\lim_{n\to\infty}\frac{H(\gamma n)}{H(n)}\in(0,\infty)$ for two values $\gamma_1,\gamma_2$ such that $\log(\gamma_1)/\log(\gamma_2)$ is finite and irrational, then $H$ is regularly varying. Indeed, choosing $\gamma_1=2, \gamma_2=3$ and noting that $\log(2)/\log(3)$ is irrational, it follows that $H$ is regularly varying with some parameter $a\in\mathbb{R}$. 
	
	It remains to argue that $a\in[0,1]$. Indeed $a<0$ in \eqref{eq:var} would imply $\mathrm{Var}(\eta(\Lambda_n))\to 0$ as $n\to\infty$, which contradicts stationarity. On the other hand, $a>1$ contradicts $F(x)\to 0$ as $x\to\infty$ due to $|\beta|(\mathbb{R}^d)$ being finite. Hence, we proved (i).\end{proof}
\subsection{The higher dimensional case}\label{sec:highdim}
We now turn to the question how to extend Theorem \ref{thm:Cov_non_int} to higher dimensions $d>1$. It turns out that the geometry (in particular inclusion-exclusion principles) in $\mathbb{R}^d$ becomes increasingly involved and so does the representation of $\mathrm{cov}(z)$. Hence, we consider the case $ d = 2 $ as a model case, which can be extended to higher dimensions using similar methods.
\\

To this end, let us recall the notion of multivariate regularly varying functions, see \cite[\S 6.1]{Resnick} and \cite[Theorem 1]{HaanOmey}\footnote{We share their point of view \emph{"The situation in $d>1$ is more complicated but not essentially different; for simplicity, we limit ourselves to $\mathbb{R}^2$"}.} for more information. Let $\mathbf 1:=(1,1)$ and define the infinite cone  starting at $z\in\mathbb{R}^d$ by $\Lambda_\infty(z):=[z_1,\infty)\times[z_2,\infty)$. We call a function $f:\Lambda_\infty(z)\to [0,\infty)$ multivariate $(a,g)$-regularly varying, if for some $a\in\mathbb{R}$ and $g:\mathbb S^{1}\to (0,\infty)$, we have
\begin{align}\label{eq:mult_reg_var}
	\lim_{n\to\infty} \frac{f(nx)}{f(n\mathbf 1)}=\|x\|^{a}g(x/\|x\|),\qquad \text{for all } x\in \Lambda_\infty(z).
\end{align}

\begin{satz}\label{thm:Cov_non_int_2}
	Let $\eta$ be a stationary hyperuniform point process on $\mathbb{R}^2$ with truncated pair correlation measure $\beta.$ Suppose that $\beta$ is box-symmetric (with respect to reflection and swapping of the axes), that ${|\beta|(\mathbb{R}^2)<\infty}$, and that $F(x):=\beta(\Lambda_\infty(x))$ is non-positive (or, non-negative) for $\|x\|$ sufficiently large. 
	Assume further that both $H_\pm(y_1,y_2):= \int_0^{y_1}\int_0^{y_2}F(\pm x_1,x_2)\mathrm{d}x_1\mathrm{d}x_2$ are multivariate $(a_\pm,g_\pm)$-regularly varying, each with $a_\pm\in[0,1]$ and such that $g_-(x,y)=g_-(y,x)$. Moreover assume $K_+:=\lim_{n\to\infty}H_+(n,n)/H_-(n,n)$ exists and set $K_-=-1$.
	Then, $\mathrm{cov}(z)$ exists and can be expressed similar to \eqref{eq:Cov_non_int}.

	More precisely, denoting $|z|:=(|z_1|,|z_2|)$ and $\tau(z)=\min(\mathrm{sign}(z_1),\mathrm{sign}(z_2))$ (i.e. $\tau(z)=-$ if and only if at least one of $z_1,z_2$ has negative sign), then for $z\in\Lambda_\infty(0)$, we have
	\begin{align*}
		\mathrm{cov}(z)= &  -K_+\|z\|^ag_+\left(\frac{z}{\|z\|}\right)+\sum_{\substack{e\in\mathbb{Z}^2\\ \|e\|=1}}\frac{K_{\tau(z+e)}}{2} \|z+e\|^ag_{\tau(z+e)}\left(\frac{|z+e|}{\|z+e\|}\right)\\
		&-\sum_{v\in\{-1,1\}^2}\frac{K_{\tau(z+v)}}{4}\|z+v\|^ag_{\tau(z+v)}\left(\frac{|z+v|}{\|z+v\|}\right)+ \ind_{\Lambda_1}(z)\frac{1}{2}\|z-\mathbf 1\|^ag_{\tau(z+v)}\left(\frac{|z-\mathbf 1|}{\|z-\mathbf 1\|}\right).
	\end{align*}
\end{satz}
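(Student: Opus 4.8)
The plan is to follow the same route as the one-dimensional Theorem~\ref{thm:Cov_non_int}, replacing the boundary calculus by its two-dimensional inclusion--exclusion counterpart. By the assumed symmetries of $\beta$ it suffices to treat $z\in\Lambda_\infty$. Starting from Proposition~\ref{prop:var}, the central observation is that for a rectangle $[a_1,b_1]\times[a_2,b_2]$ one has
\begin{align*}
\beta([a_1,b_1]\times[a_2,b_2])=F(a_1,a_2)-F(b_1,a_2)-F(a_1,b_2)+F(b_1,b_2),
\end{align*}
with $F(x)=\beta(\Lambda_\infty(x))$, so that $\beta(\Lambda_n(nz-x))$ splits into four corner contributions. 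Inserting this into \eqref{eq:Cov} and integrating $x$ over $\Lambda_n=[0,n]^2$, each corner produces a double integral of $F$ over a square which, after a substitution absorbing the corner offset, is centred at one of the shifted lattice points $z$, $z+e$ with $\|e\|=1$, or $z+v$ with $v\in\{-1,1\}^2$. These are precisely integrals of the type $\int\!\int F$ defining $G_\pm$.

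Next I would reduce all four quadrants to the two functions $G_+$ and $G_-$. Reflection symmetry of $\beta$ gives $\beta((-\infty,s]\times[t,\infty))=F(-s,t)$ and $\beta([s,\infty)\times(-\infty,t])=F(s,-t)$, while the swap symmetry together with $g_-(x,y)=g_-(y,x)$ identifies the fourth-quadrant integrals with $G_-$. The role played by \eqref{eq:hyp+sym} in $d=1$ is now taken by the four-quadrant identity
\begin{align*}
F(s,t)+F(-s,t)+F(s,-t)+F(-s,-t)=\beta(\R^2)=-1,
\end{align*}
obtained from inclusion--exclusion on $\R^2$ and hyperuniformity (Proposition~\ref{prop:hyperuniformity}). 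This relation is what cancels the affine ``$-1$'' contributions against the overlap volume $\leb_2(\Lambda_n\cap\Lambda_n(nz))$, exactly as $n(1-|z|)_+$ was absorbed in the $d=1$ proof, and it is the source of the term carrying the indicator $\ind_{\Lambda_1}(z)$, which survives only when the boxes genuinely overlap.

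It then remains to pass to the limit. Setting $z=0$ and collecting the surviving corner integrals identifies the variance normalisation as a fixed linear combination of $G_+(n,n)$ and $G_-(n,n)$; expressing everything relative to $G_-(n,n)$ and using $K_+=\lim_n G_+(n,n)/G_-(n,n)$ (finite by assumption) together with the convention $K_-=-1$ is where the constants $K_\pm$ enter. Dividing \eqref{eq:Cov} by this normalisation and applying the multivariate regular variation \eqref{eq:mult_reg_var} to each $G_\pm(n\,\cdot)$ turns every corner integral into a term $K_{\tau(z+w)}\|z+w\|^a g_{\tau(z+w)}(|z+w|/\|z+w\|)$; bundling the central point $z$ (weight $-K_+$), the four face points $z+e$ with $\|e\|=1$ (weight $\tfrac12$), and the four corner points $z+v$ with $v\in\{-1,1\}^2$ (weight $\tfrac14$), plus the overlap term, yields the claimed formula.

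The main obstacle is the combinatorial and geometric bookkeeping. Unlike in $d=1$, each rectangle of integration can straddle the coordinate axes, so before applying regular variation one must split it into its intersections with the four quadrants and attach the correct $G_\pm$, sign, and factor $K_{\tau(\cdot)}$ to each piece; keeping track of the resulting weights $\tfrac12$ versus $\tfrac14$ and of the shifted point $z+w$ that determines $\tau(z+w)$ is the delicate part. A secondary technical point, handled in the spirit of Cases~1--4 of Theorem~\ref{thm:Cov_integrable} and of the Karamata argument in Theorem~\ref{thm:Cov_non_int}, is to show that the off-axis and far-field remainders are negligible: here the hypothesis that $F$ is eventually of one sign supplies the monotonicity needed both to control these error terms and to justify the regular-variation limits.
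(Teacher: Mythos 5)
Your proposal is correct and follows essentially the same route as the paper's proof: the inclusion--exclusion of $\beta(\Lambda_n(nz-x))$ into four corner evaluations of $F$, the splitting into rectangles anchored at the origin, the four-quadrant identity $F(s,t)+F(-s,t)+F(s,-t)+F(-s,-t)=-1$ to absorb the overlap volume (producing the $\ind_{\Lambda_1}(z)$ term), the variance normalisation at $z=0$ expressed through $G_-(n,n)$ and $K_\pm$, and the final passage via multivariate regular variation. The remaining bookkeeping you flag as the main obstacle is exactly what the paper itself leaves as a ``tedious calculation''.
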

Note that Theorem \ref{thm:Cov_non_int} shows that the mere assumption of regular variation is weak for $d=1$. However, for $d=2$ there are now two different regularly varying functions, since the tail $F(-x_1,x_2)$ may contain much more mass than $F(x_1,x_2)$ whose slowly varying part should be compatible such that $K_+$ exists. It follows automatically that $a_+\le a_-$ and $K_+\le 1$. Depending on the ratio, $\mathrm{cov}(z)$ may vanish non-isotropically, which can be seen from an inspection of the proof (e.g.\ $F(x_1,x_2)=((x_1+1)(x_2+1))^{-a}$, $x_1,x_2>0$, $a\in(0,1)$, implies $a_+=2-2a<a_-=2-a$ such that $K_+=0$ and one may show that $\mathrm{cov}$ vanishes outside of a 'cross' in $\mathbb{R}^2$). 
A tail-function $F$, which avoids such unwanted effects in Theorem \ref{thm:Cov_non_int_2} is e.g. $F(x_1,x_2)=(x_1 +x_2 +1)^{-a}$, $x_1,x_2>0$. A point process having such a truncated pair correlation measure $\beta$ can be constructed as in Example \ref{ex:ex}. Assuming that $\beta$ is not only box-symmetric but even isotropic does neither simplify the statement nor the remaining assumptions of Theorem \ref{thm:Cov_non_int_2}, since such an isotropy does not carry over to infinite boxes like $F(x):=\beta(\Lambda_\infty(x))$, which we shall need during the proof.
\begin{proof}
	By symmetry, it suffices to consider  $z=(z_1,z_2)\in\Lambda_\infty(0).$ Let us denote the coordinate vectors by $e_1=(1,0),e_2=(0,1)$. By formula \eqref{eq:Cov}, splitting the box $\Lambda_n(nz-x)$ and using inclusion and exclusion principle, we obtain that 
	\begin{align}
		&\mathrm{Cov}(\eta(\Lambda_n),\eta(\Lambda_n(nz)))\label{eq:2}=n^2(1-z_1)_+(1-z_2)_+\\
		&+\int_{\Lambda_n(0)} F(nz-x)-F(n(z+e_1)-x)-F(n(z+e_2)-x)+F(n(z+e_1+e_2)-x)\mathrm d x\nonumber
	\end{align}
	For notational convenience, we abbreviate $(\int_A+\int_B)F(x)\lambda_d(\mathrm{d}x)=\int_AF(x)\lambda_d(\mathrm{d}x)+\int_BF(x)\lambda_d(\mathrm{d}x)$ and $\int_0^{-c}=-\int_{-c}^0$, $c>0$, in the following.
	Shifting and splitting the four squares of the integration area into rectangles with a corner at $0$, the above can be reformulated by
	\begin{align}\label{eq:100}
		&\mathrm{Cov}(\eta(\Lambda_n),\eta(\Lambda_n(nz)))
		=n^2(1-z_1)_+(1-z_2)_+\\
		&+\Big(\int_{n(z_1-1)}^{nz_1}\int_{n(z_2-1)}^{nz_2}
		-\int_{nz_1}^{n(z_1+1)}\int_{n(z_2-1)}^{nz_2}
		-\int_{n(z_1-1)}^{nz_1}\int_{nz_2}^{n(z_2+1)}
		+\int_{nz_1}^{n(z_1+1)}\int_{nz_2}^{n(z_2+1)}\Big)
		F(x)\lambda_2(\mathrm{d}x)\nonumber\\
		&=
		n^2(1-z_1)_+(1-z_2)_++\Big(
		4\int_0^{nz_1}\!\!\!\int_0^{nz_2} 
		-2 \int_{0}^{nz_1}\!\!\!\int_{0}^{n(z_2-1)}
		-2\int_{0}^{nz_1}\!\!\!\int_{0}^{n(z_2+1)}
		-2\int_{0}^{n(z_1-1)}\!\!\!\int_{0}^{nz_2}\nonumber\\
		&-2\int_{0}^{n(z_1+1)}\!\!\!\!\int_{0}^{nz_2}
		\!+\!\int_{0}^{n(z_1+1)}\!\!\!\!\int_{0}^{n(z_2+1)}
		\!+\!\int_{0}^{n(z_1+1)}\!\!\!\!\int_{0}^{n(z_2-1)}
		\!+\!\int_{0}^{n(z_1-1)}\!\!\!\!\int_{0}^{n(z_2+1)}
		\!+\!\int_{0}^{n(z_1-1)}\!\!\!\!\int_{0}^{n(z_2-1)}\Big)
		F(x)\lambda_2(\mathrm{d}x)\nonumber
	\end{align}
	Let us continue with the computation of the variance. By hyperuniformity and the symmetry properties of $\beta$ it holds that 
	\begin{align*}
		-1=\beta(\mathbb{R}^2)=F(x_1,x_2)+F(-x_1,x_2)+F(x_1,-x_2)+F(-x_1,-x_2).
	\end{align*}
	If $z_1,z_2<1$, we get that 
	\begin{align}
		\label{eq:3}n^2(1-z_1)_+(1-z_2)_+
		&+\int_{0}^{n(z_1-1)}\!\int_{0}^{n(z_2-1)}F(x)\lambda_2(\mathrm{d}x)= \int^{0}_{n(z_1-1)}\!\int^{0}_{n(z_2-1)}1+F(x)\lambda_2(\mathrm{d}x)\nonumber\\
		&=-\int_{0}^{n|z_1-1|}\int_{0}^{n|z_2-1|}F(x_1,-x_2)+F(-x_1,x_2)+F(x_1,x_2)\mathrm{d}x_1\mathrm{d}x_2.
	\end{align}
	Combining \eqref{eq:2} and \eqref{eq:3}, we obtain that 
	\begin{align}\label{eq:6}
		\mathrm{Var}(\eta(\Lambda_n))&=\mathrm{Cov}(\eta(\Lambda_n),\eta(\Lambda_n))\nonumber\\
		&=\Big( \int_0^{-n}\int_0^{n}
		+\int_0^{n}\int_0^{-n}
		+\int_0^{n}\int_0^{n}
		-\int_0^{n}\int_{-n}^{0}
		-\int_{-n}^{0}\int_0^{n}
		-\int_{0}^{n}\int_0^{n}\Big)F(x)\lambda_2(\mathrm{d}x)\\
		&=-4\int_{0}^{n}\int_0^{n}F(x_1,-x_2)\mathrm{d}x_1\mathrm{d}x_2.\nonumber
	\end{align}
	In order to obtain $\mathrm{cov}(z)$ explicitly, we may divide the covariance \eqref{eq:2} by the variance \eqref{eq:6}, expand the quotient by $K$ if signs do not match, consider the different cases $z_1,z_2<1$, $z_1\geq1$ or $z_2\geq1$, and $z_1,z_2\geq1$ separately, and use the regularly varying assumption. We consider $z=(z_1,z_2)\in\Lambda_\infty((1,1))$, all other cases can be treated completely similarly. 
	Combining equation \eqref{eq:100} and \eqref{eq:6}, we obtain that 
	\begin{align*}
		&\lim_{n\to\infty}\frac{\mathrm{Cov}(\eta(\Lambda_n),\eta(\Lambda_n(nz)))}{\mathrm{Var}(\eta(\Lambda_n))}\\&= \lim_{n\to\infty}\frac{H_+(n,n)}{H_-(n,n)}\Big(\frac{ 
			4 H_+(nz_1,nz_2) 
			-2 H_+(nz_1,n(z_2-1) 
			-2 H_+(nz_1,n(z_2+1))
			-2 H_+(n(z_1-1),nz_2)}{-4H_+(n,n)}\\
		& \quad+\frac{-2 H_+(n(z_1+1),nz_2) +H_+(n(z_1+1),n(z_2+1))  +H_+(n(z_1+1),n(z_2-1))
		}{-4H_+(n,n)}\\
		&\quad+\frac{
			H_+(n(z_1-1),n(z_2+1))+ H_+(n(z_1-1),n(z_2-1))}{-4H_+(n,n)}\Big)\\
		&= K_+\Big(-\|z\|^ag_+\left(\frac{z}{\|z\|}\right)+\frac{1}{2}\|z-e_2\|^ag_+\left(\frac{z-e_2}{\|z+e_2\|}\right)+\frac{1}{2}\|z+e_2\|^ag_+\left(\frac{z+e_2}{\|z+e_2\|}\right) \\
		&\quad +\frac{1}{2}\|z-e_1\|^ag_+\left(\frac{z-e_1}{\|z-e_1\|}\right) +\frac{1}{2}\|z+e_1\|^ag_+\left(\frac{z+e_1}{\|z+e_1\|}\right)- \frac{1}{4}\|z+(1,1)\|^ag_+\left(\frac{z+(1,1)}{\|z+(1,1)\|}\right) \\
		& \quad- \frac{1}{4}\|z+(1,-1)\|^ag_+\left(\frac{z+(1,-1)}{\|z+(1,-1)\|}\right) - \frac{1}{4}\|z+(-1,1)\|^ag_+\left(\frac{z+(-1,1)}{\|z+(-1,1)\|}\right)\\
		&\quad- \frac{1}{4}\|z+(-1,-1)\|^ag_+\left(\frac{z+(-1,-1)}{\|z+(-1,-1)\|}\right)\Big),
	\end{align*}
	which finishes the proof.
\end{proof}
\begin{rmk}\label{rem:rectangles_non-integrable}
  As in Remark \ref{rem:rectangles_integrable},  the result and the proof of Theorem \ref{thm:Cov_non_int_2} can be extended to non-quadratic boxes $\tilde \Lambda_n^b(nz)=nz+[0,nb_1]\times[0,nb_2]$ for $(b_1,b_2)=b,z\in\mathbb{R}^2$ by following exactly the same line of argumentation. In particular, we obtain
    \begin{align*}
        &\mathrm{Cov}(\eta(\tilde\Lambda_n^b),\eta(\tilde\Lambda_n^b(nz)))\\
        =& n^2(b_1-z_1)_+(b_2-z_2)_+\\
        &+\Big(
		4\int_0^{nz_1}\!\!\!\int_0^{nz_2} 
		-2 \int_{0}^{nz_1}\!\!\!\int_{0}^{n(z_2-b_2)}
		-2\int_{0}^{nz_1}\!\!\!\int_{0}^{n(z_2+b_2)}
		-2\int_{0}^{n(z_1-b_1)}\!\!\!\int_{0}^{nz_2}
		-2\int_{0}^{n(z_1+b_1)}\!\!\!\!\int_{0}^{nz_2}\nonumber\\
		&+\!\int_{0}^{n(z_1+b_1)}\!\!\!\!\int_{0}^{n(z_2+b_2)}
		\!+\!\int_{0}^{n(z_1+b_1)}\!\!\!\!\int_{0}^{n(z_2-b_2)}
		\!+\!\int_{0}^{n(z_1-b_1)}\!\!\!\!\int_{0}^{n(z_2+b_2)}
		\!+\!\int_{0}^{n(z_1-b_1)}\!\!\!\!\int_{0}^{n(z_2-b_2)}\Big)
		F(x)\lambda_2(\mathrm{d}x),
    \end{align*}
    and therefore
$\mathrm{Var}(\tilde\eta(\Lambda_n^b))=-4\int_{0}^{b_1n}\int_0^{b_2n}F(x_1,-x_2)\mathrm{d}x_1\mathrm{d}x_2$. 
    Using the assumptions and the notation of Theorem \ref{thm:Cov_non_int_2}, one can finally deduce that 
    \begin{align*}
		\mathrm{cov}^b(z)&= \frac{1}{\|(b_1,b_2)\|^ag_-\left(\frac{(b_1,b_2)}{\|(b_1,b_2)\|}\right)}\Big(   -K_+\|z\|^ag_+\left(\frac{z}{\|z\|}\right)+\sum_{\substack{e\in E}}\frac{K_{\tau(z+e)}}{2} \|z+e\|^ag_{\tau(z+e)}\left(\frac{|z+e|}{\|z+e\|}\right)\\
		&\quad-\sum_{v\in V}\frac{K_{\tau(z+v)}}{4}\|z+v\|^ag_{\tau(z+v)}\left(\frac{|z+v|}{\|z+v\|}\right)+ \ind_{\tilde\Lambda_1^b}(z)\frac{1}{2}\|z-\mathbf 1\|^ag_{\tau(z+v)}\left(\frac{|z-\mathbf 1|}{\|z-\mathbf 1\|}\right)\Big),
	\end{align*}
    with $E=\{(b_1,0),(-b_1,0),(0,b_2),(0,-b_2)\}$ and $V=\{(\pm b_1,\pm b_2)\}.$ Note that this is consistent with Theorem \ref{thm:Cov_non_int_2} by setting $b=\mathbf 1$ and noting that \eqref{eq:mult_reg_var} gives the prefactor $1=\|\mathbf 1\|^ag_- ( {\mathbf 1}/{\|\mathbf 1\|}) $.
\end{rmk}
\subsection{Examples}\label{sec:examples}
In this section, we discuss and construct examples of point processes satisfying the assumptions of Theorem \ref{thm:Cov_non_int} and  Theorem \ref{thm:Cov_non_int_2}.
\begin{bsp}\label{ex:GUE}
	\begin{enumerate}
		\item[(a)]
		A particularly famous example satisfying the assumption of Theorem \ref{thm:Cov_non_int} for $a=0$ is the $\mathsf{sine}_2$ process (arising via eigenvalues of GUE matrices cf.~\cite{AGZ}) with $$\frac{\mathrm{d}\beta}{\mathrm{d} \lambda_1}(x) =-\frac{\sin(\pi x)^2}{(\pi x)^2}.$$ A direct computation shows that $H(y)= (1/ {2\pi^2})\log(y)+\mathcal O (1)$ is slowly varying, i.e.\ regularly varying with parameter $a=0$.    
		\item [(b)] The previous example can be generalized to all hyperuniform determinantal point processes $\eta$ with real kernel $K(x-y)$ in $d=1$ as follows (we refer to \cite{HKPV,Sosh,LRSurvey} for more information). Its Fourier transform satisfies $0\le\hat K\le 1$ and hyperuniformity is equivalent to vanishing spectral measure/structure factor at $0$, i.e.
		$$1-\widehat K^2(0)=\int\hat K (1-\hat K)d\lambda_1=0.$$
		In particular, $K$ must be a Fourier-restriction kernel, and it follows that $\mathrm{Var} (\eta(\Lambda_n))\sim \frac {c}{2\pi^2}\log n$ for the number $c$ of spectral restrictions, see \cite[\S 1.3]{DL} and \cite[Equation (3.15)]{Sosh}. Therefore, all hyperuniform DPPs in $d=1$ correspond to the non-integrable case with $a=0$ in Theorem \ref{thm:Cov_non_int}.
	\end{enumerate}
\end{bsp}

\begin{bsp}
	Riesz gases with parameter  $a = s \in (0,1) $ are expected to belong to the setting of Theorem~\ref{thm:Cov_non_int} and Theorem~\ref{thm:Cov_non_int_2}. In \cite[Theorem 1]{boursier} and \cite[\S VI.C]{Lewin}, it is explained why ${\beta(\mathrm{d}x)\asymp |x|^{-2+s}}$ (whose second antiderivative then corresponds to Theorem \ref{thm:Cov_non_int}) should hold for high temperature, see also \cite{DerDig}. 
	
\end{bsp}
\begin{bsp}\label{ex:ex} 
	In this example, we construct perturbed lattices for which the tails of the truncated pair correlation measure decay at an arbitrary rate, implying that the assumption of Theorem \ref{thm:Cov_non_int} and Theorem \ref{thm:Cov_non_int_2} are satisfied.
	\begin{enumerate}
		\item[(a)] Let $(X_z)_{z\in\mathbb{Z}^d}$ be a family of i.i.d.\ random variables and $U\sim\mathrm{Unif}(\Lambda_1)$ an additional independent random variable. The truncated pair correlation measure of the stationarized i.i.d.\ perturbed lattice $\eta^X:=\sum_{z\in\mathbb{Z}^d}\delta_{z+X_z+U}$ for $X_z\sim h(x)\lambda_d(\mathrm{d}x)$ is given by \begin{align*}
		    \beta(\mathrm{d}x)&=
            \sum_{z\in\mathbb{Z}^d\setminus\{0\}}\int h(y)h(y+x-z)\lambda_d(\mathrm{d}y)\lambda_d(\mathrm{d}x)-\lambda_d(\mathrm{d}x),
		\end{align*}
        see also \cite[Proposition 2.8]{dereudre2024nonhyperuniformityperturbedlattices}. 
        Let $X_z\sim ({1}/{m^d})\ind_{[0,m]^d}\lambda_d(\mathrm{d}x)=\mathrm{Unif}([0,m]^d)(\mathrm{d}x)$. The total variation of the respective truncated pair correlation measure $\beta=\beta_m$ is finite if and only if $m\in\mathbb{N}$. Indeed, if $m\in\mathbb{N}$ then $\sum_{z\in\mathbb{Z}^d\setminus\{0\}}\int h(y)h(y+x-z)\lambda_d(\mathrm{d}y)=1$ for $|x|$ large enough, which precisely is the "cloaking phenomenon" described in \cite{KlattCloaked}.\footnote{More precisely, cloaking refers to the absence of atoms in the spectral measure $\hat\beta$, and by a Fourier-transformation this is implied by the integrability assumption $|\beta|(\mathbb{R})<\infty$. The converse is false in general, but for the i.i.d.\ perturbed lattice, such atoms may only come from the lattice structure ("Bragg-peaks"), and cloaking becomes equivalent to $|\beta|(\mathbb{R})<\infty$ which has been shown in \cite[\S 2, (8), C]{KlattCloaked}}. 
     It follows that $|\beta|(\mathbb{R}^d)=1$. On the other hand, if $m\not\in\mathbb{N}$ then $\sum_{z\in\mathbb{Z}^d\setminus\{0\}}\int h(y)h(y+x-z)\lambda_d(\mathrm{d}y)$ is periodic for large $x$, 
        hence $|\beta|(\mathbb{R}^d)=\infty$.
		\item[(b)]
		By appropriately combining the stationary i.i.d. perturbed lattices constructed in part (a), we can construct (non-ergodic) point processes $\eta$ exhibiting arbitrary tails of the truncated pair correlation measure $\beta$ as follows.  
		Define i.i.d.\ random variables $X_z^m\sim\mathrm{Unif}(\Lambda_m), m\in\mathbb{N},z\in\mathbb{Z}^d$, and let $Y$ be another independent random variable on $\mathbb{N}$ with probability distribution $(p_m)_{m\in\mathbb{N}}$. 
		Then, $\eta:=\sum_{m\in\mathbb{N}}\ind_{\{Y=m\}}\eta^{X^m}$ has $\beta=\sum_m\beta_m p_m$ and $\beta(\{|x|>R\})\asymp \sum_{m>cR} p_m$, which can attain any power $R^{-a}$ by choosing $(p_m)_{m\in\mathbb{N}}$ accordingly.
		\item[(c)] Moreover, one may also choose a different partition of unity by the following procedure, see also \cite{Mastrilli}: Let $(Y_z)_{z\in\mathbb{Z}^d}$ be a sequence of i.i.d.\ random variables with prescribed distribution $\mu$ on $\mathbb{R}^d$. Using the notation of part (b), the process  $\eta^{X^1+Y}=\sum_{z\in\mathbb{Z}^d}\delta_{z+U+X_z^1+Y_z}$ has integrable $\beta$ with tails coinciding with that of $\mu$. In particular, for any $a\in[0,1)$ of our Theorems, one may choose $\mu$ a Pareto distribution with parameter $a$. For $a=1$ one may choose $\mu$ as the log-Cauchy distribution.
	\end{enumerate}
\end{bsp}

\section{The multivariate Central Limit Theorem}\label{sec:cumulant_CLT}
The previous sections and results demonstrate that the microscopic correlations, described by $ \beta $, become negligible at macroscopic scales, and a universal covariance structure emerges. The number statistics $ \eta(\Lambda_n(z)) $ lose memory of the microscopic configuration of the point process $ \eta $, and in the limit $ n \to \infty $, only a ``coarse-grained'' process parametrized by the shift $ z \in \mathbb{R}^d $ remains. In this section, we identify this ``coarse-grained'' process as a Gaussian process by proving a CLT (Theorem \ref{thm:CLT}) for the number statistics of boxes.

A powerful and flexible approach for proving such a CLT is the cumulant method, which we will also adopt in this work. To this end, we define the $k$-th factorial cumulant measure $\gamma^{(k)}$ of a point process $\eta$ by
\begin{align}\label{eq:cumulant_measures}
	\gamma ^{(k)}(\mathrm{d}x_1,\dots,\mathrm{d}x_k):=\sum_{\pi\in\Pi(k)}(-1)^{|\pi|-1}(|\pi|-1)!\prod_{j=1}^{|\pi|}\alpha^{|\pi_j|}(\mathrm{d}x_{\pi_j}),
\end{align}
where $\Pi(k)$ is the set of partitions $\pi=\{\pi_1,\dots,\pi_{|\pi|}\}$ of $\{1,\dots,k\}$ and for  $\pi_j=\{k_1,\dots,k_{|\pi_j|}\}$ we write $\mathrm{d}x_{\pi_j}=\mathrm{d}x_{k_1}\dots \mathrm{d}x_{k_{|\pi_j|}}$. In particular, $\gamma^{(k)}(A^k)=\mathrm{Cum}_k(\eta(A))$ is the classical $k$-th cumulant of the random variable $\eta(A)$, $A\in \mathcal B(\mathbb{R}^d)$. Similar to \eqref{eq:shiftpaircorrelation}, for a stationary point process, we define the \textit{reduced $k$-th cumulant measure} $\gamma^{(k)}_!$ via
\begin{align}\label{eq:truncated_cumulant}
	\int f(x_1,\dots,x_k)\gamma ^{(k)}(\mathrm{d}x_1,\dots,\mathrm{d}x_k)= \int\int f(x_1+y,x_2+y,\dots,x_{k-1}+y,y)\gamma^{(k)}_!(\mathrm{d}x_1,\dots,\mathrm{d}x_{k-1}) \lambda_d(\mathrm{d}y),
\end{align} 
for non-negative measurable functions $f:(\mathbb{R}^{d})^k\to\mathbb{R}_+$.
\begin{dfn}\label{dfn:Brillinger}
	A simple stationary point process $\eta$ is called \textit{Brillinger-mixing} if $\gamma^{(k)}$ exists for all $k\geq 2$ and $|\gamma_!^{(k)}|((\mathbb{R}^d)^{k-1})<\infty$.
\end{dfn}
\begin{rmk}
	Since at least \cite{Ivanoff}, the \emph{Brillinger-mixing} condition has become a standard assumption for point processes in the context of CLTs, see for instance \cite[Theorem 3.9]{Mastrilli}. It is satisfied, for instance, for determinantal point processes with $L^2$ kernel, see \cite{BL16DPP}.
    
    On the one hand it is a strong mixing condition for $\eta$, on the other hand, since $\gamma_!^{(2)}=\beta$, the Brillinger-mixing assumption can be interpreted as a natural extension of our standing assumption $|\beta|(\mathbb{R}^d)<\infty$ to higher order. We do believe that the Brillinger-mixing condition for Theorem \ref{thm:CLT} is technical and may be weakened. However, perturbed stationarized lattices from Example \ref{ex:ex} are neither mixing nor Brillinger-mixing, and the claim of Theorem \ref{thm:CLT} may fail in general; For any fixed $m$ as in Example \ref{ex:ex} (a), we have convergence to $G$ with $a=0$ independent of the choice of $p_m$ in Example \ref{ex:ex} (b). In order to weaken the Brillinger-mixing condition, it may be possible to make use of cancellation effects within the cumulants. This has been utilized in \cite[\S 4]{YogeshKrish}, where differentiability of the test function was crucial and is not applicable in our setting.
    \end{rmk}
    
    The following result enables us to produce explicit examples satisfying the assumptions of Theorem \ref{thm:CLT}, by an i.i.d.~perturbation that transfers its heavy tails.

\begin{exlem}\label{exlem} Let $\eta=\sum_i\delta_{X_i}$ be a stationary point process on $\mathbb{R}^d$ with $k$-th factorial cumulant measure $\gamma^{(k)}$. For any distribution $\mu$ on $\mathbb{R}^d$ define the perturbation $\tilde \eta=\sum_i\delta_{X_i+Y_i}$ for i.i.d.~$Y_i\sim\mu$. Then,
\begin{enumerate}[(a)]
    \item The process $\tilde \eta$ has factorial cumulant measure $\tilde\gamma^{(k)}=\gamma^{(k)}\ast\mu^{\otimes k}$, where $\ast$ is the convolution on $(\mathbb{R}^d)^k$.
    \item The process $\tilde \eta$ has 
$\tilde\gamma^{(k)}_!=\gamma^{(k)}_!\ast\mu^{\Delta}$, where $\mu^\Delta$ is the distribution of $(Y_1-Y_k,\dots,Y_{k-1}-Y_k)$.
    \item If $\eta$  is Brillinger-mixing, then also $\tilde\eta$ is Brillinger-mixing. If for all $k\ge 2$, the measure $\gamma_!^{(k)}$ has a uniform sign, then the converse is also true.
    \item Let $\mu$ on $\mathbb{R}$ have regularly varying density $f$ of order ${a-2}$ for $a\in (0,1)$ and $\eta$ on $\mathbb{R}$ be hyperuniform and Brillinger-mixing with $0\le -\frac{\mathrm d \beta }{\mathrm d x}(x)=o(f(x))$, such as the $\mathsf{sine}_2$ process. Then, $\tilde \eta$ satisfies the assumptions of Theorem \ref{thm:CLT}.
\end{enumerate}
\end{exlem}

Statement $(b)$ for $k=2$ has already  been mentioned in Example \ref{ex:ex} (a). 
In this example, we also saw that the converse statement in $(c)$ may fail without the additional sign assumption, since the perturbation $\mu=\mathrm{Unif}(\Lambda_1)$ of the lattice turned $\beta=\gamma_!^{(2)}$ integrable. We refer to \cite{Andrea} for more details on how $\beta$ changes under perturbation. Even though the stationarized $\mathrm{Unif}(\Lambda_1)$-perturbed lattice has a non-uniformly-signed $\gamma_!^{(3)}$, one may argue that periodic components present in $\gamma_!^{(3)}$ prevent Brillinger-mixing after any perturbation, cf.~\cite{KlattCloaked,LRSurvey}.\footnote{We thank the anonymous referee for raising our attention that led us to clarify this point.} 

\begin{proof} We make use of marking and mapping theorems from \cite[\S 5]{last_penrose_2017}. 
The independent marking $\xi=\sum_i\delta_{(X_i,Y_i)}$ has factorial moment measures $\alpha_\xi^k=\alpha^k\otimes \mu^{\otimes k}$. The perturbed point process $\tilde \eta=T_\#\xi$ is the push-forward of $\xi$ under the mapping $T:(X,Y)\mapsto X+Y$ and thus $T^{\otimes k}=(T,\dots,T)$ pushes the factorial moment measures forward to $\tilde\alpha^k=T^{\otimes k}_\#\alpha_\xi^k=\alpha^k\ast \mu^{\otimes k}$, where $\ast$ is the convolution on $(\mathbb{R}^d)^k$. Therefore, the first claim (a) follows from \eqref{eq:cumulant_measures}, by linearity of $\ast$ and the fact that the product of the disjoint block convolutions combines into a single full $(\mathbb{R}^d)^k$ convolution $$\prod_j\tilde \alpha^{|\pi_j|} =\prod_j\left(\alpha^{|\pi_j|}\ast \mu^{\otimes |\pi_j|}\right)=\left(\prod_j\alpha^{|\pi_j|}\right)\ast \mu^{\otimes k}.$$

The first claim also implies the second claim (b), since \eqref{eq:truncated_cumulant} can be rewritten as $$\gamma^{(k)}(\mathrm d x_1,\dots,\mathrm d x_k)=\gamma_!^{(k)}( \mathrm d (x_1-x_k),\dots ,\mathrm d (x_ {k-1}-x_k))\lambda_d(\mathrm d x_k).$$

The third claim (c) follows from Young's convolution inequality for the total variation norm $$|\tilde\gamma_!^{(k)}|((\mathbb{R}^d)^{k-1})=|\gamma_!^{(k)}\ast\mu^\Delta|((\mathbb{R}^d)^{k-1})\le |\gamma_!^{(k)}|((\mathbb{R}^d)^{k-1})\cdot|\mu^\Delta|((\mathbb{R}^d)^{k-1})= |\gamma_!^{(k)}|((\mathbb{R}^d)^{k-1}).$$
If $\gamma_!^{(k)}$ has a uniform sign, then we have equality as the total variation becomes the total mass and the convolution with a probability measure does not change the total mass.

It remains to prove (d): The i.i.d.~perturbed process $\tilde \eta$ is still Brillinger-mixing by $(c)$ and hyperuniform since $\tilde\beta(\mathbb{R})=\gamma^{(2)}_!\ast\mu^{\Delta} (\mathbb{R})=\beta(\mathbb{R})=-1$ by $(b)$. Importantly, the density $-\frac{\mathrm d \tilde\beta }{\mathrm d x}(x)=-\frac{\mathrm d \beta }{\mathrm d x}\ast f (x) \sim f(x)$ inherits the regularly varying asymptotics as $|x|\to\infty$ of $\mu$ by \cite[Theorem 2.1]{bingham2006regularly} (see also \cite[Lemma 2.34]{Foss}). Therefore, $\tilde\eta$ satisfies the assumption (i) of Theorem \ref{thm:Cov_non_int}.
\end{proof}

We continue with the proof of Theorem \ref{thm:CLT} and show that for hyperuniform point processes in $\mathbb{R}^d,\ d\geq 2$ satisfying the assumptions of Theorem \ref{thm:Cov_integrable}, Theorem \ref{thm:Cov_non_int} or Theorem \ref{thm:Cov_non_int_2} and the Brillinger-mixing assumption, we have weak convergence in finite-dimensional distributions
\begin{align}
	\Bigg(\frac{\eta(\Lambda_n(z))-n^d}{\sqrt{\mathrm{Var}\big(\eta(\Lambda_n)\big)}}\Bigg)_{z\in\mathbb{R}^d}{\underset{n\to\infty}\longrightarrow}\big(G(z)\big)_{z\in\mathbb{R}^d} 
\end{align}
towards a centered stationary Gaussian field $G$ with covariance function $\mathrm{cov}(z)$. Further, we prove that the same statement holds true for point processes on $\mathbb{R}$ with the additional assumption that $\eta$ satisfies \eqref{ieq:Cov_non_int} with $a>0$.
\begin{rmk}\label{rem:rectangles_CLT}
    The result  of Theorem \ref{thm:CLT} can be extended to rectangular boxes of the form $\tilde \Lambda_n^b(nz)=nz+\times_{i=1}^d[0,nb_i]$, for $(b_1,\ldots, b_d)=b,z\in\mathbb{R}^d$ by following the same line of argumentation as in the proof of Theorem \ref{thm:CLT}.  In particular,  
    \begin{align*}
	\Bigg(\frac{\eta(\tilde\Lambda_n^b(nz))-n^d\prod_{i=1}^db_i}{\sqrt{\mathrm{Var}\big(\eta(\tilde\Lambda_n^b)\big)}}\Bigg)_{z\in\mathbb{R}^d}{\underset{n\to\infty}\longrightarrow}\big(G^b(z)\big)_{z\in\mathbb{R}^d} 
    \end{align*}
    towards a centered stationary Gaussian field $G^b$ with covariance function $\mathrm{cov}^b(z)$ defined in Remark \ref{rem:rectangles_integrable} and Remark \ref{rem:rectangles_non-integrable} for the multidimensional case or $\mathrm{cov}^1$ in the one dimensional case.
\end{rmk}
\begin{proof}[Proof of Theorem \ref{thm:CLT}]
	
	Fix $N\in\mathbb{N}$ and $z_1,\dots,z_N\in\mathbb{R}^d$. In order to prove the weak convergence
	$$ \mathrm{Var}(\eta(\Lambda_n))^{-1/2}\big(\eta(\Lambda_n(nz_1))-n^d,\dots,\eta(\Lambda_n(nz_N))-n^d\big) \rightarrow (G(z_1),\dots,G(z_N))
	$$
	via the Cram\'er Wold device, it suffices to show that for all $t_1,\dots,t_N>0$ and $\varphi_n(x)=\sum_{i=1}^Nt_i\mathbb 1_{\Lambda_n(nz_i)}(x)$ we have
	$$
	\frac {\sum_{i=1}^Nt_i\big(\eta(\Lambda_n(nz_i))-n^d\big)} {\sqrt{\mathrm{Var}(\eta(\Lambda_n))}}
	=\frac {\eta(\varphi_n)-n^d \sum_i t_i}{\sqrt{\mathrm{Var}(\eta(\Lambda_n))}}
	\rightarrow \sum_{i=1}^Nt_iG(z_i).
	$$
	To this end, we shall calculate the cumulants $\mathrm{Cum}_k$ and show that cumulants of high order $k$ vanish. The claim then follows from Marcinkiewicz theorem, stating that only the Gaussian distribution has a finite number of nonzero cumulants.
	The first cumulant is the mean, which is equal zero. The second cumulant is the variance, giving
	\begin{align*}
		\mathrm{Cum}_2\Big(\frac {\eta(\varphi_n)-n^d \sum_i t_i}{\sqrt{\mathrm{Var}(\eta(\Lambda_n))}}\Big)=\frac{\mathrm{Var}(\eta(\varphi_n))}{\mathrm{Var}(\eta(\Lambda_n))}=\sum_{i,j=1}^Nt_it_j\frac{\mathrm{Cov}\big(\eta(\Lambda_n(nz_i)),\eta(\Lambda_n(nz_j))\big)}{\mathrm{Var}(\eta(\Lambda_n))}\to \sum_{i,j=1}^Nt_it_j\mathrm{cov}(z_i-z_j)
	\end{align*}
	by stationarity and our previous results.
	
	By our assumption, we have $\mathrm{Var}(\eta(\Lambda_n))\gtrsim n^{\epsilon}$ for some $\epsilon>0$: Indeed, for $d\ge 2$ the claim holds for $\epsilon=d-1$ by hyperuniformity and, for $d=1$ we assumed $a>0$, meaning $\mathrm{Var}(\eta(\Lambda_y))$ to be regularly varying with $a>0$ so that the claim holds with $\epsilon =a$. Since cumulants of order $k\ge 2$ are shift invariant and $k$-homogeneous, it follows
	\begin{align}\label{eq:cum_phi}
		\mathrm{Cum}_k\Big(\frac {\eta(\varphi_n)-n^d \sum_i t_i}{\sqrt{\mathrm{Var}(\eta(\Lambda_n))}}\Big)\lesssim n^{-k\epsilon /2} \mathrm{Cum}_k(\eta(\varphi_n)).
	\end{align}
	This is asymptotically negligible for $k$ sufficiently large, if we verify that the cumulant is of order $\mathcal O(n^d)$. Indeed, let us choose $c>0$ large enough such that $\Lambda_n(nz_i)\subseteq [-cn,cn]^d$ for all $i=1,\dots,N$, then it follows from boundedness of $\varphi_n$ that
	$$
	|\mathrm{Cum}_k(\eta(\varphi_n))|
	=\Big\lvert\int \prod_{j=1}^k\varphi_n(x_j)\gamma^{(k)} (\mathrm{d}x_1,\dots,\mathrm{d}x_k)\Big\rvert
	\lesssim \gamma^{(k)}([-cn,cn]^{dk}),
	$$
	where the implicit constant in $\lesssim$ only depends on  $k,N,t_i,z_i,\eta$ but not on $n$. The Brillinger-mixing condition and  \eqref{eq:truncated_cumulant} imply that $\gamma^{(k)}([-cn,cn]^{dk})=\mathcal O( n^d)$, which completes the proof. 
	
	Given a fractional Brownian motion $B_{a/2}$, the claim
	$$\mathbb{E}[(B_{a/2}(z+1)-B_{a/2}(z))(B_{a/2}(w+1)-B_{a/2}(w))]=\mathrm{cov}(z-w)$$
	follows from a direct calculation using the covariance $\mathbb{E}[B_{a/2}(z)B_{a/2}(w)]=\frac1 2 (|z|^a +|w|^a -|z-w|^a)$.
\end{proof} 

\begin{rmk}
	Lifting the convergence of finite-dimensional distributions in the statement of Theorem \ref{thm:CLT} to path convergence in the Skorokhod space (or continuous paths in the non-integrable case) seems only possible with a delicate improved analysis of dependence on $z$ in the error terms of our proof and additional assumptions on the explicit expression of $\beta$. We would need to control \eqref{eq:Cov_non_int2} near $z\approx 0$, corresponding to a uniform convergence of the defining limit of regular variation \eqref{eq:regvarr}, which may depend strongly on $\beta$.
\end{rmk}

\section*{Acknowledgements}
We thank Thomas Lebl\'e for bringing up the question of this work and Martin Huesmann for helpful discussions and valuable feedback. Our thanks also go to the anonymous referees for their careful reading and constructive suggestions. HS was supported by the Deutsche Forschungsgemeinschaft (DFG, German Research Foundation) under Germany's Excellence Strategy EXC 2044/2 –390685587, Mathematics M\"unster: \emph{Dynamics-Geometry-Structure}. JJ has been supported by the DFG priority program SPP 2265 \emph{Random Geometric Systems}.
\bibliography{Quel}
\bibliographystyle{alpha}

\end{document}